\title{On complex extension of the Liouville map}
\address{XD: Department of Mathematics, Kingsborough CC of the City University of New York, 2001 Oriental Blvd, Brooklyn, NY 11235}
\email{Xinlong.Dong@kbcc.cuny.edu}
\author{Xinlong Dong and Dragomir \v Sari\' c}
\address{DS: Department of Mathematics, Queens College of the City University of New York, 65--30 Kissena Blvd., Flushing, NY 11367}
\address{DS: Mathematics PhD. Program, Graduate Center of the City University of New York, 365 Fifth Avenue, New York, NY 10016-4309}
\email{Dragomir.Saric@qc.cuny.edu}
\thanks{The second author was partially supported by the collaboration grant  346391 from the Simons Foundation and a PSC-CUNY research grant.}
\newtheorem{thm}{Theorem}
\newtheorem{lem}[thm]{Lemma}
\theoremstyle{definition}
\theoremstyle{plain}
\newcommand{\D}{\mathbb D}
\newcommand{\C}{\mathbb C}
\newcommand{\R}{\mathbb R}
\newcommand{\HH}{\mathbb H}
\newcommand{\T}{\mathcal  T}
\newcommand{\HHH}{\mathcal H}
\newcommand{\W}{\mathbf  W}
\newcommand{\LLL}{\mathcal  L}
\renewcommand{\leq}{\leqslant}
\renewcommand{\geq}{\geqslant}
\renewcommand{\epsilon}{\varepsilon}
\renewcommand{\phi}{\varphi}
\subjclass{}
\keywords{}
\date{\today}
\begin{document}

\begin{abstract}
The Liouville map assigns to each point in the Teichm\"uller space a positive Radon measure on the space of geodesics of the universal covering of the base Riemann surface. This construction which was introduced by Bonahon is valid for both finite and infinite Riemann surfaces. Bonahon and S\"ozen proved that the Liouville map is differentiable for closed Riemann surfaces and the second author extended this result to all other Riemann surfaces. Otal proved that the Liouville map is real analytic using an idea from the geometric analysis. The purpose of this note is to give another proof of Otal's result using a complex analysis approach.
\end{abstract}

%%%%%%%%%%%%%%%
\maketitle
%%%%%%%%%%%%%%%

A Riemann surface $X$ is {\it conformally hyperbolic} if its universal covering is the upper half-plane $\HH =\{ z=x+iy:y>0\}$. The hyperbolic metric on $X$ is the projection of the hyperbolic metric $\frac{|dz|}{y}$
on $\HH$ by the covering map. The Teichm\"uller space $\T (X)$ is the space of quasiconformal maps from the base surface $X$ onto variable Riemann surfaces up to isometries and homotopies. When $X$ is a closed or finite area conformally hyperbolic Riemann surface, the Teichm\"uller space $\T (X)$ is a finite-dimensional complex manifold and when $X$ is not of finite area then $\T (X)$ is an infinite-dimensional complex Banach manifold.

The space of geodesics $G(\tilde{X})$ of the universal covering $\tilde{X}$ of a conformally hyperbolic Riemann surface $X$ 
supports a natural {\it Liouville measure} which is the unique (up to scalar multiple) measure of full support that is invariant under the isometries of $\tilde{X}$. A $\pi_1(X)$-invariant Radon measure on $G(\tilde{X})$ is called a {\it geodesic current} for $X$.   
Bonahon \cite{Bonahon} introduced the {\it Liouville map} $$\LLL :\T (X)\to \mathcal{G}(X)$$ from the Teichm\"uller space $\T (X)$ to the space of geodesic currents $\mathcal{G} (X)$ that assigns to each 
quasiconformal deformation $[f:X\to Y] \in \T (X)$ of the Riemann surface X the pullback of the Liouville measure of $Y$ under the deformation $f$.

When the Riemann surface $X$ is compact, Bonahon \cite{Bonahon} used the Liouville map to introduce an alternative description of the Thurston boundary to $\T (X)$. Under the assumption that the Riemann surface X is compact, Bonahon and S\"ozen \cite{BonahonSozen} considered a notion of differentiability for the Liouville map $\LLL: \T (X) \to \mathcal{G} (X)$, by extending $\mathcal{G} (X)$ to a topological vector space $\mathcal{H} (X)$ of $\pi_1(S)$-invariant H\"older distributions on the space of geodesics $G(\tilde{X})$; see \S \ref{sec:Holder} for precise definitions. In this framework, they proved that $\LLL$ is continuously differentiable.

The second author \cite{Saric}, and more recently, Bonahon and the second author \cite{BonahonSaric} introduced the Thurston boundary to Teichm\"uller spaces of arbitrary conformally hyperbolic Riemann surfaces. In order to account for the control over deformations of an arbitrary conformally hyperbolic Riemann surface $X$, the second author \cite{Saric} has introduced a family of semi-norms on the space of geodesic currents $\mathcal{G}({X})$ as follows. 

For a fixed $0<\lambda\leq 1$, a {\it test function} $\xi :G(\tilde{X})\to\mathbb{R}$ is a real-valued $\lambda$-H\"older continuous function with compact support on $G(\tilde{X})$ such that its  support can be mapped by a (normalizing) isometry $\gamma$ of $\tilde{X}$ to a fixed compact subset $K$ of $G(\tilde{X})$ and $\|\xi\circ \gamma^{-1}\|_{\lambda}\leq 1$, where the $\lambda$-norm of $\xi$ is given by $\|\xi\|_{\lambda}:=\sup_{g\in G(\tilde{X})} |\xi (g) |+\sup_{g_1\neq g_2}\frac{|\xi (g_1)-\varphi (g_2)|}{d(g_1,g_2)^{\lambda}}$. The distance $d(g_1,g_2)$ is the maximum of the distances between the pairs of endpoints of $g_1$ and $g_2$ on the ideal boundary $\partial\tilde{X}$ of $\tilde{X}$ in the angle metric with respect to a fixed point in $X$; see \S \ref{sec:Holder} for more details.

The family of semi-norms $\{\|\cdot\|_{\lambda}\}_{0<\lambda\leq 1}$ on the space of geodesic currents $\mathcal{G}(X)$ for $X$ is defined by 
$$
\|\alpha\|_{\lambda}=\sup_{\xi}\Big{|}\int_{G(\tilde{X})}\xi  d\alpha\Big{|}
$$
where $\alpha\in\mathcal{G}(X)$ and the supremum is over all test functions $\xi$ for the exponent $\lambda$
 (see \cite{Saric}, \cite{Saric1} for details). The space of {\it bounded geodesic currents} $\mathcal{G}_b(X)$ for $X$ consists of all $\alpha\in\mathcal{G} (X)$ such that, for every $0<\lambda\leq 1$, 
 $$
 \|\alpha\|_{\lambda}<\infty
 .$$

The tangent vectors to $\T (X)$ are identified with the Zygmund bounded functions on the ideal boundary $\partial\tilde{X}$ of the universal covering $\tilde{X}$ (for example, see \cite[\S 16]{GardinerLakic}). A Zygmund function induces a finitely additive signed measure on $G(\tilde{X})$ which, in general, is not countably additive. Therefore to discuss differentiability of the Liouville map, a larger topological vector space that contains $\mathcal{G}_b(X)$ is needed.
The second author \cite{Saric1} introduced a family of spaces $\mathcal{H}^{\lambda}(X)$ of $\pi_1(X)$-invariant  H\"older distributions on $G(\tilde{X})$ for  $0<\lambda\leq 1$. Each $W\in\mathcal{H}^{\lambda}(X)$ is a $\pi_1(X)$-invariant linear functional on the space of $\lambda$-H\"older continuous functions with compact support that satisfies
$$
\| W\|_{\lambda}:=\sup_{\xi} |W(\xi )|<\infty 
$$
where the supremum is over all test functions for the exponent $\lambda$.

The second author \cite{Saric1}
 proved that the Liouville map $\LLL :\T (X)\to\mathcal{H}^{\lambda}(X)$
 is continuously differentiable in this general setting for each $0<\lambda\leq 1$. Otal \cite{Otal} proved that the Liouville map is real analytic using the above construction of the spaces $\mathcal{H}^{\lambda}(X)$.

\begin{thm}[Otal \cite{Otal}]
The Liouville map
$$
\LLL :\T (X)\to\mathcal{H}^{\lambda} (X)
$$
is real analytic for each $0<\lambda\leq 1$.
\end{thm}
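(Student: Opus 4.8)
The plan is to realize $\LLL$ as the restriction to a real slice of a \emph{holomorphic} map between complex Banach manifolds, and then to invoke the fact that the restriction of a Banach-space-valued holomorphic map to a real slice of a complexified domain is real analytic.

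First I would fix a Fuchsian uniformization $X=\HH/\Gamma$ and model $\T(X)$ on the open unit ball $B$ of the \emph{real} Banach space $V$ of $\Gamma$-invariant Beltrami differentials on $\hat{\C}$ that are symmetric under $z\mapsto\bar z$, so that the normalized solution $f^{\mu}$ of the Beltrami equation preserves $\HH$ and $\hat{\R}=\partial\HH$, modulo Teichm\"uller equivalence. Its complexification $V_{\C}$ is exactly the complex Banach space of all $\Gamma$-invariant Beltrami differentials on $\hat{\C}$: the symmetrization $\eta\mapsto\overline{\eta(\bar z)}$ is a conjugate-linear involution with fixed-point set $V$, so $V_{\C}=V\oplus iV$, and $B$ is the real slice $B_{\C}\cap V$ of its unit ball $B_{\C}$. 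For $\eta\in B_{\C}$ let $f_{\eta}$ be the normalized quasiconformal homeomorphism of $\hat{\C}$ with Beltrami coefficient $\eta$; by the Ahlfors--Bers theorem $\eta\mapsto f_{\eta}(p)$ is holomorphic on $B_{\C}$ for every fixed $p\in\hat{\C}$, in particular for $p\in\hat{\R}$.

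Next I would extend the Liouville map to $B_{\C}$. A geodesic current is determined by its masses on boxes $Q=[a,b]\times[c,d]$ of geodesics, and for $\LLL(\mu)$ these masses equal, in a suitable normalization, $\log\mathrm{cr}(f^{\mu}(a),f^{\mu}(b),f^{\mu}(c),f^{\mu}(d))$, the logarithm of the cross ratio of the four boundary images. I set $\widetilde{\LLL}(\eta)(Q):=\log\mathrm{cr}(f_{\eta}(a),f_{\eta}(b),f_{\eta}(c),f_{\eta}(d))$; this is legitimate for $\eta$ near the real slice, since then the image points cluster near $\hat{\R}$ and the cross ratios stay in a neighborhood of $\R_{>0}$ on which a single branch of $\log$ is holomorphic. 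Multiplicativity of the cross ratio under subdivision makes these masses additive, so they assemble into a complex Radon measure $\widetilde{\LLL}(\eta)$ on $G(\HH)$; M\"obius invariance of the cross ratio, applied to the group $f_{\eta}\Gamma f_{\eta}^{-1}$, makes $\widetilde{\LLL}(\eta)$ invariant under $\Gamma$; and for $\eta\in V$ it reduces to $\LLL(\eta)$.

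It remains to show that $\widetilde{\LLL}$ is holomorphic into the complexification $\HHH^{\lambda}(X)_{\C}$ on an open set $\Omega\subseteq B_{\C}$ with $\Omega\cap V$ a neighborhood of any prescribed point of $B$. The test functions $\xi$ furnish, via $W\mapsto W(\xi)$, a separating family of functionals on $\HHH^{\lambda}(X)_{\C}$, so it suffices to prove that $\widetilde{\LLL}$ is locally bounded for $\|\cdot\|_{\lambda}$ and that $\eta\mapsto\widetilde{\LLL}(\eta)(\xi)$ is holomorphic for each fixed $\xi$. For the latter I would approximate $\int\xi\,d\widetilde{\LLL}(\eta)$ by Riemann--Stieltjes sums over finite grids of boxes: each such sum is a finite combination of logarithms of cross ratios of the holomorphic functions $\eta\mapsto f_{\eta}(p)$, hence holomorphic, and the $\lambda$-H\"older continuity of $\xi$ together with local boundedness forces uniform convergence on compacta, so the limit is holomorphic. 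The hard part, and the main obstacle, is the local boundedness estimate $|\widetilde{\LLL}(\eta)(\xi)|\leq C\|\xi\|_{\lambda}$ with $C$ uniform for $\eta$ in a complex neighborhood of a point of $B$; this is also exactly what makes $\widetilde{\LLL}(\eta)$ an element of $\HHH^{\lambda}(X)_{\C}$ at all. On the real slice it is the known fact that $\LLL$ takes values in $\HHH^{\lambda}(X)$, and the point to be verified is that the quasiconformal estimates behind it --- uniform H\"older and quasisymmetry bounds for the boundary values of $f_{\eta}$, which depend only on $\|\eta\|_{\infty}<1$ and not on $f_{\eta}$ preserving $\HH$ --- survive small complex perturbations of $\eta$ with locally uniform constants. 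Granting this, $\widetilde{\LLL}\colon\Omega\to\HHH^{\lambda}(X)_{\C}$ is holomorphic and sends $\Omega\cap V$ into the real subspace $\HHH^{\lambda}(X)$, since symmetric coefficients give real cross ratios; expanding $\widetilde{\LLL}$ in its homogeneous Taylor series at a point of the real slice and restricting the homogeneous terms to $V$, where they are forced to be $\HHH^{\lambda}(X)$-valued, presents $\LLL$ locally as a convergent series of continuous real-homogeneous polynomials, which is real analyticity. Since this holds near every point, and the quotient map $B\to\T(X)$ has local holomorphic sections, the theorem follows.
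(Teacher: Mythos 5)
Your overall strategy --- extend $\LLL$ holomorphically to a neighborhood of the real slice $\T(X)$ inside the ball of all $\Gamma$-invariant Beltrami coefficients on $\hat{\C}$ (the quasi-Fuchsian space), then restrict to the totally real submanifold --- is exactly the route taken here and by Otal. But the write-up has one conceptual error and one genuine gap. The conceptual error: the box masses $\log\mathrm{cr}(f_{\eta}(a),f_{\eta}(b),f_{\eta}(c),f_{\eta}(d))$ for non-symmetric $\eta$ do \emph{not} assemble into a complex Radon measure. They are finitely additive on boxes (by multiplicativity of the cross ratio under subdivision), but there is no countable additivity or bounded variation; $\widetilde{\LLL}(\eta)$ exists only as a linear functional on H\"older functions. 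This matters: if it were a Radon measure you could pair it with merely continuous $\xi$ and the H\"older hypothesis would be irrelevant, whereas in fact $\widetilde{\LLL}(\eta)(\xi)$ is \emph{defined} as the limit of your Riemann--Stieltjes sums, and that limit exists only because the H\"older modulus of $\xi$ beats the failure of the box masses to be both small and additive.

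The genuine gap is that the convergence of those sums --- hence both the existence of $\widetilde{\LLL}(\eta)(\xi)$ and the uniform bound $|\widetilde{\LLL}(\eta)(\xi\circ\gamma)|\leq C$ over normalizing isometries $\gamma$ --- does not follow from ``local boundedness'' of the cross ratios. What is needed is a quantitative decay: if a sub-box at generation $n$ has Liouville mass $\leq C4^{-n}$, so its corners satisfy $|\mathrm{cr}-1|\leq C4^{-n}$, then the image corners must satisfy $|\mathrm{cr}(f_{\eta}(\cdots))-1|\leq C4^{-\omega n}$ with $\omega$ arbitrarily close to $1$ as $\eta$ approaches the real slice. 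Only then does the telescoping estimate $|I_{n+1}-I_n|\leq C\,4^{n}\cdot 2^{-\lambda n}\cdot 4^{-\omega n}$ sum, and only when $1-\frac{\lambda}{2}-\omega<0$ --- which is also why the complex neighborhood must shrink with $\lambda$ and cannot be chosen once for all test functions. The paper supplies exactly this estimate in Lemma~\ref{lem:liouville-complex}, by applying the Schwarz lemma to the holomorphic map $t\mapsto\mathrm{cr}(f^{t\mu}(a),f^{t\mu}(b),f^{t\mu}(c),f^{t\mu}(d))$ into $\C\setminus\{0,1\}$ and using the asymptotics of the hyperbolic metric of the thrice-punctured sphere near the puncture at $1$, obtaining $|\mathrm{cr}(f(a),\ldots)-1|\leq|\mathrm{cr}(a,\ldots)-1|^{1/(K+\epsilon)}$; Otal gets the analogous control by bounding the pairing on $C^{2}$ functions in terms of their H\"older norms. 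Your ``Granting this'' sentence defers precisely the step that constitutes the proof; everything before and after it is correct in outline.
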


The description of the topology on $\mathcal{G}_b(X)$ using the $\lambda$-norms is somewhat complicated due to the fact that the test functions need to have $\lambda$-norm less than $1$ under the pre-compositions of the normalizing isometries and their supports need to be of certain size. In addition, one is then required to take the supremum over all test functions for a fixed H\"older exponent $\lambda$.

More recently, Bonahon and the second author \cite{BonahonSaric} introduced a simpler family of semi-norms on $\mathcal{G}_b(X)$ that describes the same topology on the space of bounded geodesic currents $\mathcal{G}_b(X)$ for an arbitrary conformally hyperbolic Riemann surface $X$. Given a continuous function $\zeta :G(\tilde{X})\to\mathbb{R}$ with compact support, we define a semi-norm $\|\cdot\|_{\zeta}$ on $\mathcal{G}_b(X)$ as follows. For $\alpha\in \mathcal{G}_b(X)$ set
$$
\|\alpha\|_{\zeta}=\sup_{\gamma}\Big{|}\int_{G(\tilde{X})} \zeta \circ\gamma d\alpha\Big{|}
$$
where the supremum is over all isometries $\gamma$ of $\tilde{X}$. Using the topology on $\mathcal{G}_b(X)$ induced by the family of semi-norms $\{\|\cdot\|_{\zeta}\}$, Bonahon and the second author \cite{BonahonSaric} proved that the Liuoville map $\LLL :\T (X)\to\mathcal{G}_b(X)$ is an embedding onto its image and that the boundary of the projectivization of $\LLL (\T (X))$ is exactly equal to the space of projective bounded measured laminations - a Thurston boundary to $\T (X)$.

With this in mind, we introduce a new  space of H\"older distributions on $G(\tilde{X})$ and give an alternative proof to the above theorem of Otal for this new target space. Given a H\"older continuous function $\xi :G(\tilde{X})\to\mathbb{C}$ with a compact support, define
$$
\|\alpha\|_{\xi}=\sup_{\gamma}\Big{|}\int_{G(\tilde{X})}\xi\circ\gamma d\alpha \Big{|}
$$
for any $\alpha\in\mathcal{G}_b(X)$, where the supremum is over all isometries $\gamma$ of $\tilde{X}$. 

Let $H(\tilde{X})$ be the space of all H\"older continuous functions $\xi :G(\tilde{X})\to\mathbb{C}$ with compact support. 
The space of {\it bounded H\"older distributions} $\mathcal{H}_b(X)$ for $X$ consists of all linear functionals $W:H(\tilde{X})\to\mathbb{C}$ such that,  for all $A\in\pi_1(X)$ and $\xi\in H(\tilde{X})$, $$W (\xi\circ A)=W(\xi )$$ and, for each $\xi\in H(\tilde{X})$,
$$
\| W\|_{\xi}:=\sup_{\gamma}|W(\xi\circ\gamma )|<\infty
$$
where the supremum is over all isometries $\gamma$ of $\tilde{X}$. 

The space of bounded H\"older distributions $\mathcal{H}_b(X)$ is a complex topological vector space whose topology is induced by the finite intersections of balls for the semi-norms $\{\| \cdot \|_{\xi}\}_{\xi}$ (see \cite[\S 2.3]{BonahonSaric}). The notions of complex and real analytic maps in this paper are in terms of this topological vector space structure on $\mathcal{H}_b(X)$. 
We prove

\begin{thm}
\label{thm:main}
The Liouville map
$$
\LLL :\T (X)\to \mathcal{H}_b(X)
$$
is real analytic.
\end{thm}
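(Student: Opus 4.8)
The plan is to establish analyticity by exhibiting, around each basepoint of $\T(X)$, a locally convergent power series expansion of the Liouville measure as a function of the Beltrami coefficient, valued in $\mathcal H_b(X)$. The key idea is that the Liouville measure on $G(\tilde X)$ has an explicit formula in terms of the endpoints of geodesics: in the upper half-plane model, pulling back along a quasiconformal map $f^\mu$ with Beltrami coefficient $\mu$, the measure $\LLL([f^\mu])$ is given on a box of geodesics with endpoint intervals $[a,b]\times[c,d]\subset\partial\HH\times\partial\HH$ by the cross-ratio expression $\log\frac{(f^\mu(a)-f^\mu(c))(f^\mu(b)-f^\mu(d))}{(f^\mu(a)-f^\mu(d))(f^\mu(b)-f^\mu(c))}$, i.e.\ the logarithm of the cross-ratio of the boundary values of $f^\mu$. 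So the first step is to recall this cross-ratio formula and to reduce the analyticity of $\LLL$ to the analyticity of the map $\mu\mapsto$ (boundary values of $f^\mu$), composed with the cross-ratio, evaluated against test functions $\xi\in H(\tilde X)$.

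First I would fix a basepoint $X_0$ and work with Beltrami coefficients $\mu$ in the open unit ball of $L^\infty(\HH)$ supported appropriately; by a standard normalization one can assume $f^\mu$ fixes $0,1,\infty$. The classical theorem of Ahlfors--Bers on holomorphic dependence of solutions of the Beltrami equation gives that $z\mapsto f^\mu(z)$, and in particular its boundary values on $\partial\HH=\hat{\R}$, depend holomorphically on $\mu$; more precisely $\mu\mapsto f^{t\mu}(z)$ extends to a holomorphic function of a complex parameter in a neighborhood of $[0,1]$, uniformly for $z$ in compact subsets of $\hat\C$. The second step is to upgrade this to a genuine convergent power series $f^\mu = \sum_n P_n(\mu)$ where $P_n$ is a bounded homogeneous polynomial of degree $n$ on $L^\infty$, with the series converging uniformly on compact sets of $\hat\C$ when $\|\mu\|_\infty$ is small. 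The third step: substitute this expansion into the cross-ratio/logarithm formula for the Liouville measure restricted to the compact support of a test function $\xi$; since on that compact set the cross-ratio is bounded away from $0$ and $\infty$, the logarithm is an analytic function of its argument, and composing analytic (entire, near the relevant values) functions with the convergent power series for the boundary values yields a convergent power series expansion of $\int_{G(\tilde X)}\xi\circ\gamma\, d\LLL([f^\mu])$ in $\mu$, with coefficients that are bounded homogeneous polynomials.

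The crucial fourth step is to control everything \emph{uniformly over the isometries} $\gamma$ of $\tilde X$, since the topology on $\mathcal H_b(X)$ is given by the semi-norms $\|W\|_\xi = \sup_\gamma |W(\xi\circ\gamma)|$. The point is that each $\xi\circ\gamma$ has support obtained by moving the fixed compact support $K$ of $\xi$ by the isometry $\gamma$, and the Liouville measure is isometry-invariant, so $\int \xi\circ\gamma\, d\LLL([f^\mu])$ equals $\int\xi\, d\LLL([\gamma^{-1}\circ f^\mu\circ\gamma])$ up to the normalization conjugation; the Beltrami coefficient of $\gamma^{-1}\circ f^\mu\circ\gamma$ has the same $L^\infty$-norm as $\mu$ (Möbius conjugation preserves $\|\cdot\|_\infty$), and after renormalizing the three fixed points one needs the bounds on the power-series coefficients $P_n$ and on the analytic dependence of the cross-ratio to be uniform. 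This uniformity is where the work concentrated: one must show the radius of convergence and the operator norms of the homogeneous polynomial coefficients, as functionals against test functions on the fixed compact set $K$, do not deteriorate as $\gamma$ ranges over all of $\mathrm{PSL}(2,\R)$. I expect this to be the main obstacle, and I would handle it by exploiting that the relevant estimates only ever involve the behavior of the normalized maps on the fixed compact set $K$ together with the standard Cauchy-estimate machinery for the holomorphic family $t\mapsto f^{t\mu}$, both of which are insensitive to the conjugating isometry once the normalization is fixed. Finally, having the uniform convergent power series expansion in each semi-norm $\|\cdot\|_\xi$, one concludes that $\LLL$ is complex analytic on the complexified neighborhood and hence real analytic on $\T(X)$, completing the proof of Theorem~\ref{thm:main}.
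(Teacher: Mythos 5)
Your overall strategy --- complexify the Beltrami coefficient, invoke Ahlfors--Bers holomorphic dependence of $f^\mu$ on $\mu$, and control the seminorms $\|\cdot\|_\xi$ uniformly in $\gamma$ by conjugating and renormalizing --- is the same broad shape as the paper's argument (extend $\LLL$ to a neighborhood of $\T(X)$ in $\mathcal{QF}(X)$ and show the extension is holomorphic). However, there is a genuine gap at your third step, and it is in fact the crux of the whole proof. The pairing $\int_{G(\tilde X)}\xi\, d\LLL([f^\mu])$ is \emph{not} a finite composition of analytic functions: it is the limit of Riemann-type sums $\sum_{i,j}\xi(a_i,c_j)\log cr\bigl(f^\mu(a_{i-1},a_i,c_{j-1},c_j)\bigr)$ over finer and finer partitions of the box supporting $\xi$. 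For real $\mu$ these sums converge because $\log cr(f^\mu(\cdots))$ is a positive measure; but once $\mu$ is complexified the set function $\log cr(f^\nu(\cdots))$ is complex-valued and only finitely additive, its total variation over the $4^n$ sub-boxes can diverge as $n\to\infty$, and the limit defining the pairing need not exist at all. Saying that ``the logarithm is an analytic function of its argument, and composing analytic functions with the convergent power series yields a convergent power series expansion of the integral'' silently assumes exactly the uniform convergence that has to be proved. You also identify the uniformity over $\gamma$ as the main obstacle, but that part is comparatively routine (conjugation by a M\"obius transformation preserves $\|\mu\|_\infty$, so the normalized family has uniform quasiconformality constants); the real difficulty is the convergence of the complexified pairing in the first place.

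The paper closes this gap with a quantitative cross-ratio estimate (Lemma~\ref{lem:liouville-complex} and its infinitesimal version Lemma~\ref{lem:infinitesimal-liouville}): if $|cr(a,b,c,d)-1|<\delta$ then $|cr(f(a),f(b),f(c),f(d))-1|\le |cr(a,b,c,d)-1|^{1/(K+\epsilon)}$, proved via the Schwarz lemma for the holomorphic motion $t\mapsto cr(f^t(\cdots))$ into $\C\setminus\{0,1\}$ and the asymptotics of the hyperbolic metric near a puncture. Applied to sub-boxes of Liouville mass $\asymp 4^{-n}$ this gives $|\log cr(f^\nu(\cdots))|\lesssim 4^{-\omega n}$ with $\omega=1/(K+\epsilon)$ close to $1$, and combined with the H\"older modulus $|\xi(a_{ik},c_{jk})-\xi(a_i,c_j)|\lesssim 2^{-\lambda n}$ the telescoping differences of the Riemann sums are $O\bigl(4^{(1-\lambda/2-\omega)n}\bigr)$, which is summable precisely when the complex neighborhood is small enough that $\omega>1-\lambda/2$. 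This is also why the extension only exists on a neighborhood $\mathcal{V}_{\epsilon(\lambda)}$ depending on the H\"older exponent of the test function, a phenomenon already noted by Otal and absent from your outline. Without an estimate of this type your power-series argument cannot be made to converge, so you would need to supply Lemma~\ref{lem:liouville-complex} (or an equivalent decay bound on $|cr-1|$ under quasiconformal maps) to complete the proof.
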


Otal \cite{Otal} extended the Liouville map from the Teichm\"uller space $\T (X)$ to its open neighborhood in the quasi-Fuchsian space $\mathcal{QF}(X)$ and proved that the extension is holomorphic in an appropriate sense. This implies that the Liouville map is real analytic. The existence of the extension of the Liouville map to a neighborhood of the Teichm\"uller space $\T (X)$ is proved using geometric analysis ideas. Namely, the extended Liouville map acts as a linear functional on the twice differentiable functions and the extension to the H\"older continuous functions is established by showing that the norm of the extension to twice differentiable functions is bounded in terms of their H\"older norms. We replace this argument by finding the rate of decay of the imaginary part of the cross-ratio of four points under quasiconformal maps which are not too far from fixing the real line (see Lemma \ref{lem:liouville-complex} and \ref{lem:infinitesimal-liouville}). This may be of independent interest since we obtain a direct estimate of the cross-ratio of four points whereas traditionally the estimates were given for one point at a time. 

\vskip .2 cm

{\it Acknowledgements.} The second author is grateful to Francis Bonahon for the many conversations over the years regarding geodesic currents. Both authors are indebted to the referee for careful reading and useful comments.

\section{The Teichm\"uller and quasi-Fuchsian spaces}

In this section we introduce the Teichm\"uller and Quasi-Fuchsian space, their manifold structures and a  realization of the Teichm\"uller space as a totally real submanifold of the Quasi-Fuchsian space.

Any conformally hyperbolic Riemann surface $X$ is identified  with $\HH /\Gamma$, where $\Gamma <PSL_2(\R )$ is a Fuchsian group acting on the upper half-plane $\HH$. Consider the family of all quasiconformal map $f:\HH \to\HH$ that conjugate the Fuchsian group $\Gamma$ onto another Fuchsian group and fix $0$, $1$ and $\infty$. Two quasiconformal maps $f,f_1:\HH\to\HH$ from the family are {\it Teichm\"uller equivalent} if they agree on the ideal boundary $\hat{\R}=\R\cup\{\infty\}$ of the upper half-plane $\HH$. The Teichm\"uller space $\T(X)$ consists of all Teichm\"uller equivalence classes $[f]$. 

The Beltrami coefficient of a quasiconformal map $f:\HH\to\HH$ is given by $\mu =\frac{f_{\bar{z}}}{ f_z}$ and it satisfies $\|\mu\|_{\infty}<1$. Conversely, given $\mu\in L^{\infty}(\HH )$ with $\|\mu\|_{\infty}<1$ there exists a unique (normalized) quasiconformal map $f:\HH \to\HH$ that fixes $0$, $1$ and $\infty$ whose Beltrami coefficient is $\mu$. A quasiconformal map $f$ conjugates $\Gamma$ onto another Fuchsian group if and only if $\mu\circ\gamma\frac{\overline{\gamma'}}{\gamma'}=\mu$ for all $\gamma\in \Gamma$. 
Two Beltrami coefficients are {\it Teichm\"uller equivalent} if the corresponding normalized quasiconformal maps are equal on $\hat{\R}$.  Therefore we can define $\T (X)$ to be a set of all Teichm\"uller classes $[\mu]$ of Beltrami coefficients that satisfy $\mu\circ\gamma (z)\frac{\overline{\gamma'(z)}}{\gamma'(z)}=\mu (z)$ for all $\gamma\in \Gamma$ and $z\in\HH$ (for example, see \cite{GardinerLakic}).

A quasiconformal map $f:\hat{\mathbb{C}}\to\hat{\mathbb{C}}$ that conjugates  $\Gamma<PSL_2(\mathbb{R})$ onto a subgroup of $PSL_2(\mathbb{C})$ fixing $0$, $1$ and $\infty$ represents an element of the quasi-Fuchsian space $\mathcal{QF}(\Gamma )$. Two quasiconformal maps $f$ and $g$ fixing $0$, $1$ and $\infty$ that conjugate $\Gamma$ onto a subgroup of $PSL_2(\C )$ are {\it equivalent} if they agree on $\hat{\R}=\R\cup\{\infty\}$. Denote by $[f]\in\mathcal{QF}(\Gamma )$ the corresponding equivalence class. Equivalently, we can define $\mathcal{QF}(\Gamma )$ to consist of all equivalence classes $[\mu ]$ of Beltrami coefficients on $\C$ where two Beltrami coefficients are equivalent if their corresponding normalized quasiconformal maps agree on $\hat{\R }$. 
If $\Gamma$ is trivial then $f$ is a quasiconformal map fixing $0$, $1$ and $\infty$ which does not necessarily preserve the upper half-plane $\HH$. 

A quasiconformal map $f:\HH\to\HH$ extends by the reflection $f(z)=\overline{f(\bar{z})}$ for $z$ in the lower half-plane $\HH^{-}=\{ z:Im(z)<0\}$ to a quasiconformal map of $\hat{\C}$. The corresponding Beltrami coefficient satisfies $\mu (z)=\overline{\mu (\bar{z})}$ for $z$ in the lower half-plane $\HH^{-}$. The Teichm\"uller space $\T (X)$ embeds into the quasi-Fuchsian space $\mathcal{QF}(X)$ by extending each Beltrami coefficient $\mu$ on $\HH$ to $\C$ using the reflection in the real line.

Bers introduced a complex Banach manifold structure to the Teichm\"uller space $\T (X)$. The complex chart around the basepoint $[0 ]\in \T (X)$ is obtained as follows. Let $\tilde{\mu}$ be the Beltrami coefficient which equals $\mu$ in the upper half-plane $\HH$ and equals zero in the lower half-plane $\HH^{-}$. The solution $f=f^{\tilde{\mu}}$ to the Beltrami equation $ f_{\bar{z}}=\tilde{\mu}  f_z$  is conformal in the lower half-plane $\HH^{-}$. The Schwarzian derivative 
$$
S(f^{\tilde{\mu}})(z)=\frac{(f^{\tilde{\mu}})'''(z)}{(f^{\tilde{\mu}})'(z)}-\frac{3}{2}\Big{(}\frac{(f^{\tilde{\mu}})''(z)}{(f^{\tilde{\mu}})'(z)}\Big{)}^2
$$
for $z\in\HH^{-}$ defines a holomorphic function $\phi (z)=S(f^{\tilde{\mu}})(z)$ which satisfies
$(\phi \circ\gamma )(z) \gamma'(z)^2=\phi (z)$ and $\|\varphi\|_{b}:=\sup_{z\in\HH^{-}}|y^2\phi (z)|<\infty$, called a {\it cusped form} for $X$. The space of all cusped forms $\phi :\HH^{-}\to\C$ for $X$ is a complex Banach space $\mathcal{Q}_b(X)$ with the norm $\|\cdot\|_b$ (see \cite{GardinerLakic}).  

The Schwarzian derivative maps the unit ball in $L^{\infty}(\HH )$ onto an open subset of $\mathcal{Q}_b(X)$ and it projects to a homeomorphism $\Phi$ from $\T (X)$ to an open subset of $\mathcal{Q}_b(X)$ containing the origin. 
The open ball $B_{[0]}(\frac{1}{2}\log 2)$ in $\T (X)$ of radius $\frac{1}{2}\log 2$ and center $[0]$ maps under $\Phi$ onto an open set in $\mathcal{Q}_b(X)$ which contains the ball of radius $\frac{2}{3}$ and is contained in the ball of radius $2$ with center $0\in\mathcal{Q}_b(X)$. The map $\Phi :B_{[0]}(\frac{1}{2}\log 2)\to \mathcal{Q}_b(X)$ is a chart map for the base point $[0]\in\T (X)$ (see \cite[\S 6]{GardinerLakic}). The Ahlfors-Weill section provides an explicit formula for $\Phi^{-1}$ on the ball of radius $\frac{1}{2}$ and center $0$ in $\mathcal{Q}_b(X)$. Namely if $\phi\in \mathcal{Q}_b(X)$ with $\|\phi\|_{b}<\frac{1}{2}$ then Ahlfors and Weill prove that $\Phi^{-1}(\phi )=[-2y^2\phi (\bar{z})]$ (see \cite[\S 6]{GardinerLakic}). The Beltrami coefficient $\eta_{\phi}(z):=-2y^2\phi (\bar{z})$  is said to be {\it harmonic}. The Ahlfors-Weill formula gives an explicit expression of Beltrami coefficients that are representing points in $\T (X)$ corresponding  to the  holomorphic disks $\{t\phi :|t|<1,\|\phi\|_{b}<\frac{1}{2}\}$ in the chart in $\mathcal{Q}_b(X)$, namely $$\Phi^{-1}(\{t\phi :|t|<1\})=\{ [t\eta_\phi ]\in\T (X) :|t|<1\}.$$

Let $[\mu_0]\in \T (X)$ be fixed and define $X^{[\mu_0]}=\mathbb{H}/ f^{\mu_0}\Gamma (f^{\mu_0})^{-1}$ to be the image Riemann surface. A chart for $[\mu_0]\in\T (X)$ is given by the chart of $[0]\in\T (X^{[\mu_0]})$ under the {\it translation map} $T_{[\mu_0 ]}:\T (X)\to \T (X^{[\mu_0]})$ defined by
$$
T_{[\mu_0 ]}([f])=[f\circ (f^{\mu_0})^{-1}]
$$
for $[f]\in\T (X)$. Namely, let $B^{[\mu_0]}_{[0]}(\frac{1}{2}\log 2)\subset \T (X^{[\mu_0]})$ be a ball with center $[0]$ and radius $\frac{1}{2}\log 2$ in the Teichm\"uller space of $X^{[\mu_0]}$. Let $\Phi_{[\mu_0]}$ be the Bers embedding for $\T (X^{[\mu_0]})$. Then the chart for $[\mu_0]$ is given by
$$
\Phi_{[\mu_0]}\circ T_{[\mu_0]}:T_{[\mu_0]}^{-1}(B^{[\mu_0]}_{[0]}(\frac{1}{2}\log 2))\to \mathcal{Q}_b(X^{[\mu_0]})
$$
where $T_{[\mu_0]}^{-1}(B^{[\mu_0]}_{[0]}(\frac{1}{2}\log 2))=B_{[\mu_0]}(\frac{1}{2}\log 2)$ because the translation map $T_{[\mu_0]}$ is an isometry for the Teichm\"uller metric. If $\{ t\phi:|t|<1\}\subset \mathcal{Q}_b(X^{[\mu_0]})$ using the Ahlfors-Weill section and the chain rule we have
$$
(\Phi_{[\mu_0]}\circ T_{[\mu_0]})^{-1}(t\phi )=\Big{[}\frac{ t(\eta_\phi\circ f^{\mu_0}(z))
\frac{\overline{f^{\mu_0}_{{z}}(z)}}{f^{\mu_0}_z(z)}+{\mu_0 (z)}}{1+ t(\eta_\phi\circ f^{\mu_0}(z))\frac{\overline{f^{\mu_0}_{{z}}(z)}}{f^{\mu_0}_z(z)}\overline{\mu_0(z)}}\Big{]}.
$$

By the Bers simultaneous uniformization theorem, the quasi-Fuchsian space $\mathcal{QF}(X)$ is identified with the product of the Teichm\"uller space $\T (X)$ of the Riemann surface $X$ and the Teichm\"uller space $\T (\bar{X})$ of the mirror image Riemann surface $\bar{X}$. A point $[\mu ]\in \mathcal{QF}(X)$ is represented by a Beltrami coefficient $\mu$ on $\C$ and we identify it with the pair of (equivalence classes of) Beltrami coefficients $([\mu_1],[\mu_2])$, where $\mu_1=\mu |_{\HH}$ and $\mu_2=\mu |_{\HH^{-}}$.  The Teichm\"uller space $\T (\bar{X})$ is defined as the equivalence classes of Beltrami coefficients in $\HH^{-}$ invariant under $\Gamma$ where two Beltrami coefficients are equivalent if their corresponding normalized quasiconformal maps from $\HH^{-}$ onto itself agree on $\hat{\R}$. The charts on $\mathcal{QF}(X)$ are given by the product  charts on $\T (X)\times\T (\bar{X})$ and the inverse of the charts are given by the product of the Ahlfors-Weil sections. The Teichm\"uller space $\T(X )$ embeds into $\mathcal{QF}(X)$ as a totally real-analytic submanifold by the formula $[\mu ]\mapsto [\tilde{\mu}]$, where $\tilde{\mu}(z)=\mu (z)$ for $z\in\HH$ and $\tilde{\mu }(z)=\overline{\mu (\bar{z})}$ for $z\in\HH^{-}$.

Given a pair of Beltrami coefficients $(\mu_1 ,\mu_2 )$ with $\mu_1\in L^{\infty}(\HH )$ and $\mu_2\in L^{\infty}(\HH^{-})$ define $\tilde{\mu}_1=\mu_1$ on $\HH$, $\tilde{\mu}_1=0$ on $\HH^{-}$, and $\tilde{\mu}_2=\mu_2$ on $\HH^{-}$, $\tilde{\mu}_2=0$ on $\HH$. The Schwarzian derivatives $S(f^{\tilde{\mu}_1})$ and  $S(f^{\tilde{\mu}_2})$ give a pair of holomorphic functions $(\phi_1,\phi_2)$ in $\HH^{-}$ and $\HH$, respectively. The holomorphic functions $\phi_1$ and $\phi_2$ satisfy the invariance property under $\Gamma$ and the boundedness in the $\|\cdot\|_b
$-norm in their respective domains. Let $\eta_i$ be the harmonic Beltrami coefficient corresponding to $\phi_i$ under the Ahlfors-Weill section. Then the pair of harmonic Beltrami coefficients $\eta:=(\eta_1,\eta_2)$ represents the point in $\mathcal{QF}(X)$ that is mapped to $\phi :=(\phi_1,\phi_2)$ under the Bers embedding. Such $\eta$ is called a {\it complex harmonic differential}.

\section{The quasiconformal variations of the cross-ratio}

In this section we estimate the change in the cross-ratio of a quadruple of points under quasiconformal variations. This is the main technical ingredient in the paper.

Given four distinct points $a,b,c,d\in\hat{\mathbb{C}}$, define the {\it cross-ratio}
$$
cr(a,b,c,d)=\frac{(a-c)(b-d)}{(a-d)(b-c)}.
$$

By a hyperbolic metric on a Riemann surface we mean a unique conformal metric of curvature $-1$. 
We first give an estimate of the hyperbolic distance $\rho_{\D^*}$ in the punctured unit disk $\mathbb{D}^{*}=\{ z:0<|z|<1\}$. Recall that the infinitesimal form of the hyperbolic metric is $|dz|/(|z|\log 1/|z|)$ (see \cite[\S 1.7]{Ahlfors}).

\begin{lem}
\label{lem:punctured-disk}
Let $\mathbb{D}^{*}$ be equipped with the complete hyperbolic metric $\rho_{\mathbb{D}^{*}}$ and let $0<\beta <1$ be a point on the positive radius. There exists $r=r(\beta )>0$ such that  
$$
|\beta_1|\leq \beta^{e^{-\rho_{\mathbb{D}^{*}} (\beta ,\beta_1)}}
$$ 
for all $\beta_1\in\mathbb{D}^{*}$ with $\rho_{\D^*}(\beta ,\beta_1)< r$.
\end{lem}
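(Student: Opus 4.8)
The plan is to reduce the whole estimate to one soft fact: on $\mathbb{D}^{*}$ the positive function $u(z):=\log(1/|z|)$ is such that $\log u$ is $1$-Lipschitz for the hyperbolic distance $\rho_{\mathbb{D}^{*}}$, and the asserted inequality is simply this Lipschitz bound rewritten.

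First I would record the pointwise computation. Writing $\lambda(z)=\frac{1}{|z|\log(1/|z|)}$ for the density of the complete hyperbolic metric, recall that for a conformal metric $\lambda|dz|$ the norm of the gradient of a smooth function measured in that metric is $\lambda^{-1}$ times its Euclidean norm. Since $u=-\log|z|$ has Euclidean gradient of norm $|\nabla u|=1/|z|$, this gives
$$
|\nabla u|_{\rho_{\mathbb{D}^{*}}}(z)=|z|\log(1/|z|)\cdot\frac{1}{|z|}=\log(1/|z|)=u(z),\qquad z\in\mathbb{D}^{*}.
$$
Moreover $u>0$ on $\mathbb{D}^{*}$, so $\log u$ is a well-defined smooth function there.

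Next, for any piecewise $C^{1}$ path $\gamma\colon[0,L]\to\mathbb{D}^{*}$ from $\beta$ to $\beta_{1}$, the chain rule together with the Cauchy--Schwarz inequality for the hyperbolic gradient yields
$$
\Big|\frac{d}{dt}\log u(\gamma(t))\Big|=\frac{1}{u(\gamma(t))}\Big|\frac{d}{dt}u(\gamma(t))\Big|\leq\frac{|\nabla u|_{\rho_{\mathbb{D}^{*}}}(\gamma(t))}{u(\gamma(t))}\,|\gamma'(t)|_{\rho_{\mathbb{D}^{*}}}=|\gamma'(t)|_{\rho_{\mathbb{D}^{*}}},
$$
so $|\log u(\beta_{1})-\log u(\beta)|\leq\ell_{\rho_{\mathbb{D}^{*}}}(\gamma)$; taking the infimum over all such $\gamma$ gives $|\log u(\beta_{1})-\log u(\beta)|\leq\rho_{\mathbb{D}^{*}}(\beta,\beta_{1})$. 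In particular $\log\log(1/|\beta_{1}|)\geq\log\log(1/\beta)-\rho_{\mathbb{D}^{*}}(\beta,\beta_{1})$, i.e. $\log(1/|\beta_{1}|)\geq e^{-\rho_{\mathbb{D}^{*}}(\beta,\beta_{1})}\log(1/\beta)$, and exponentiating this is exactly $|\beta_{1}|\leq\beta^{\,e^{-\rho_{\mathbb{D}^{*}}(\beta,\beta_{1})}}$.

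I do not anticipate a genuine obstacle: the only points requiring care are the conversion between Euclidean and hyperbolic gradient norms and the observation that $u>0$ on $\mathbb{D}^{*}$, which legitimizes passing to $\log u$. Note that the argument in fact proves the inequality for every $\beta_{1}\in\mathbb{D}^{*}$, so any $r=r(\beta)>0$ works. If one prefers to argue on the universal cover, the substitution $z=e^{-\xi}$ identifies $\mathbb{D}^{*}$ with a quotient of the right half-plane $\{\operatorname{Re}\xi>0\}$ carrying $|d\xi|/\operatorname{Re}\xi$, under which $u$ becomes $\operatorname{Re}\xi$; the claim then turns into $\operatorname{Re}\xi_{1}\geq(\operatorname{Re}\xi_{0})\,e^{-\rho(\xi_{0},\xi_{1})}$, which one reads off from the Euclidean description of hyperbolic balls and which is sharp along the real axis, i.e. along the positive radius of $\mathbb{D}^{*}$.
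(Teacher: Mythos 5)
Your proof is correct, and it takes a genuinely different route from the paper. The paper lifts to the universal cover $\tau(z)=e^{iz}:\mathbb{H}\to\mathbb{D}^{*}$, restricts to a ball of radius $r(\beta)=\sinh^{-1}\frac{\pi}{2\log(1/\beta)}$ on which $\tau$ is an isometry, and then reads off $\mathrm{Im}(z_1)\geq(\log\frac{1}{\beta})e^{-D}$ from the Euclidean description of the hyperbolic disk of radius $D$ centered at $i\log\frac{1}{\beta}$; this is where the hypothesis $\rho_{\mathbb{D}^{*}}(\beta,\beta_1)<r$ enters. You instead work intrinsically in $\mathbb{D}^{*}$ and show that $\log\log(1/|z|)$ has hyperbolic gradient of norm $1$, hence is $1$-Lipschitz for $\rho_{\mathbb{D}^{*}}$; the stated inequality is then just this Lipschitz bound exponentiated twice. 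The two arguments rest on the same underlying fact (in the half-plane model, $|\log(y_1/y_0)|\leq\rho_{\mathbb{H}}(z_0,z_1)$, which is what your closing remark makes explicit), but your version buys two things: it needs no choice of lift and no local-isometry radius, so it proves the inequality for \emph{all} $\beta_1\in\mathbb{D}^{*}$ with no restriction on $r$ — strictly stronger than what the lemma asserts and than what the paper's proof yields — and it is arguably softer, using only the standard conversion $|\nabla u|_{\lambda}=\lambda^{-1}|\nabla u|_{\mathrm{eucl}}$ for a conformal metric. The only steps needing care, which you handle, are that conversion and the positivity of $u=\log(1/|z|)$ on $\mathbb{D}^{*}$ so that $\log u$ is defined along every competing path.
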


\begin{proof}
The universal covering map $\tau :\mathbb{H}\to \mathbb{D}^{*}$ is given by the formula $\tau (z)=e^{iz}$. The hyperbolic metric $\rho_{\D^*}$ on $\mathbb{D}^{*}$ is the push-forward by $\tau$ of the hyperbolic metric $\rho_{\HH}$ on $\HH$. The covering $\tau :\mathbb{H}\to \mathbb{D}^{*}$ is a local isometry. Indeed, there is $r=r(\beta )>0$ such that the covering map $\tau$ is an isometry from the open hyperbolic disk $\Delta$ with the center $i\log \frac{1}{\beta }$ and hyperbolic radius  $r$ in $\HH$ onto the hyperbolic disk $\tau (\Delta )$ in $\D^*$ with the center $\beta$ and radius $r$. An elementary computation shows that the radius $r=r(\beta )>0$ can be chosen to be   $\sinh^{-1}\frac{\pi}{2\log\frac{1}{\beta}}$.

Set $D=\rho_{\mathbb{D}^{*}} (\beta ,\beta_1)$. Note that $\tau (i\log\frac{1}{\beta} )=\beta$ by the definition of $\tau$ and that $D< r$ because $\beta_1\in\tau (\Delta )$. 
Let $z_1\in{\Delta}$ such that $\tau (z_1)=\beta_1$. Then $\rho_{\HH}(z_1,i\log\frac{1}{\beta})=\rho_{\D^*}(\beta ,\beta_1)=D$. The hyperbolic disk $\Delta_1$ with the center $i\log\frac{1}{\beta}$ and radius $D$  intersects the $y$-axis along its diameter with endpoints  $i(\log\frac{1}{\beta} )e^{-D}$ and  $i(\log\frac{1}{\beta} )e^{D}$. Therefore ${\Delta}_1$ is a Euclidean disk with the center $i(\log\frac{1}{\beta} )\cosh D$ and radius $(\log\frac{1}{\beta} )\sinh D$.

Since $z_1$ lies on the boundary of $\Delta_1$ we have 
$$
|Re (z_1)|\leq (\log\frac{1}{\beta})\sinh D
$$
and
$$
Im(z_1)\geq (\log\frac{1}{\beta} )e^{-D}.
$$
This implies 
$$
|\beta_1|=|e^{iz_1}|\leq e^{-(\log\frac{1}{\beta}) e^{-D}}=(e^{-\log\frac{1}{\beta}})^{e^{-D}}=\beta^{e^{-D}}
$$
which gives the lemma. 
\end{proof}

We need the following estimate on the size of the cross ratio of four points under the action of a quasiconformal map of $\C$. This estimate is the main ingredient in our proofs.

\begin{lem}
\label{lem:liouville-complex}
Let $f$ be a $K$-quasiconformal deformation of $\Gamma$ that fixes $0$, $1$ and $\infty$. Given $\epsilon >0$ there exist $\delta =\delta (K) >0$ and $C=C(K)>0$ such that for any four distinct $a,b,c,d\in\hat{\mathbb{C}}$ with
$$
|cr(a,b,c,d)-1|<\delta
$$
we have
$$
| cr(f(a),f(b),f(c),f(d)) -1|\leq |cr(a,b,c,d)-1|^{1/(K+\epsilon) }.
$$
\end{lem}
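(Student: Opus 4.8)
The plan is to reduce to the case where the quadruple $(a,b,c,d)$ is normalized by a M\"obius transformation and then use Lemma \ref{lem:punctured-disk} to control the image cross-ratio. First I would use the fact that the cross-ratio is a M\"obius invariant: precomposing $f$ with a M\"obius map $m$ sending $(a,b,c,d)$ to a standard position does not change either side of the inequality, except that $m$ need not fix $0,1,\infty$. To handle this, the key observation is that the statement only involves the $K$-quasiconformality of $f$ (through the exponent $1/(K+\epsilon)$) and the fact that $f$ approximately fixes $\hat\R$ --- more precisely, one should track the constants $\delta=\delta(K)$ and $C=C(K)$ so that they depend only on $K$ and not on the particular normalization. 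So I would first pass to $g=f\circ m^{-1}$, where $m$ is chosen so that $m(a),m(b),m(c),m(d)$ are four points with $m(a),m(b),m(c)$ fixed at, say, $0,1,\infty$ and the fourth point $m(d)=\lambda$ close to a point where $cr=1$; recall $cr(0,1,\infty,\lambda)=\lambda$, so $|cr(a,b,c,d)-1|<\delta$ means $|\lambda-1|<\delta$.

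The heart of the argument is then the following: the four points $m(a),m(b),m(c),m(d)$ all lie near a single point of $\hat\R$ (when $\lambda$ is near $1$, three of the normalized points are far apart, so instead I would normalize differently --- send $a,b$ to $0,\infty$ and $c$ to $1$, so that $cr(a,b,c,d)=\frac{d}{c}\cdot\frac{b-c}{b-d}$ evaluates to $d$ after normalization, hmm, let me instead normalize so that the degenerating pair is visible). The right normalization is to send $a\mapsto 0$, $b\mapsto\infty$, and $d\mapsto 1$; then $cr(a,b,c,d)=\frac{(a-c)(b-d)}{(a-d)(b-c)}$ becomes, with these images, $\frac{(0-c')(\infty-1)}{(0-1)(\infty-c')}=c'$ where $c'$ is the image of $c$. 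So $|cr-1|<\delta$ says $|c'-1|<\delta$, i.e. the images of $c$ and $d$ are both within $\delta$ of each other (both near $1$), while the images of $a$ and $b$ are at $0$ and $\infty$. Now $g=f\circ m^{-1}$ maps $\hat\C$ to $\hat\C$ and is $K$-quasiconformal; moreover $g(0),g(\infty),g(1)$ are the images under $f$ of $a,b,d$, which lie on $\hat\R$ (since $f$ preserves $\hat\R$), so $g$ maps three points of $\hat\R$ to three points of $\hat\R$. After a further post-composition by a M\"obius map fixing $\hat\R$ I may assume $g$ fixes $0,1,\infty$, hence $g$ is a $K$-quasiconformal self-map of $\hat\C$ fixing $0,1,\infty$ --- though it need not preserve $\HH$. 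The cross-ratio $cr(g(0),g(\infty),g(1),g(c'))=cr(0,\infty,1,g(c'))=g(c')$, while on the other side $cr(f(a),f(b),f(c),f(d))=cr(g(0),g(\infty),g(c'),g(1))=g(c')$ by M\"obius invariance --- wait, I need the same four points in the same order; carefully, $cr(f(a),f(b),f(c),f(d))=cr(g\circ m(a),\dots)=cr(g(0),g(\infty),g(c'),g(1))$, and since $cr(0,\infty,w,1)=\frac{(0-w)(\infty-1)}{(0-1)(\infty-w)}=w$, this equals $g(c')$. So the inequality to prove becomes simply $|g(c')-1|\le|c'-1|^{1/(K+\epsilon)}$ for a $K$-quasiconformal $g$ fixing $0,1,\infty$ and all $c'$ with $|c'-1|<\delta(K)$.

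This last statement is exactly a quantitative H\"older-continuity estimate for $K$-quasiconformal maps fixing $0,1,\infty$ at the point $1$: such maps are H\"older continuous with exponent $1/K$ on compact sets, with the sharp local exponent being $1/K$ in the limit, which is why the $\epsilon$-loss to $1/(K+\epsilon)$ appears. I would deduce it from Lemma \ref{lem:punctured-disk} as follows. Consider the punctured disk $\D^*$ around $1$ (rescaled and translated), or rather work with the thrice-punctured sphere $\hat\C\setminus\{0,1,\infty\}$: the map $g$ restricts to a $K$-quasiconformal self-map of $\hat\C\setminus\{0,1,\infty\}$, which is hyperbolic. Near the puncture at $1$, the hyperbolic metric of $\hat\C\setminus\{0,1,\infty\}$ is comparable to that of the punctured disk $\D^*$ with $|w-1|$ playing the role of $|z|$. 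A $K$-quasiconformal map changes hyperbolic distance on the thrice-punctured sphere by at most a factor $K$ (plus an additive constant; but near the cusp, using the collar lemma, the multiplicative distortion $K$ dominates and the additive error becomes negligible as $c'\to 1$ --- this is the source of $\epsilon$). So $\rho^*(1,g(c'))\le K\,\rho^*(1,c')+O(1)$ where $\rho^*$ is the hyperbolic distance near the cusp at $1$, comparable to $-\log|c'-1|$. Feeding this into Lemma \ref{lem:punctured-disk} gives $|g(c')-1|\le|c'-1|^{e^{-K\rho^*(1,c')\cdot(1+o(1))/\rho^*(1,c')}}\approx|c'-1|^{1/(K+\epsilon)}$ once $\delta=\delta(K)$ is small enough to absorb the additive constant and the metric comparison into the $\epsilon$.

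\textbf{Main obstacle.} The hardest part is making rigorous the passage from the distortion inequality for hyperbolic distance on the thrice-punctured sphere (a multiplicative factor $K$ plus additive constant) to the clean exponent $1/(K+\epsilon)$, and verifying that the additive constant and the comparison between the thrice-punctured-sphere metric and the model $\D^*$ metric can both be swept into the arbitrarily small $\epsilon$-loss by shrinking $\delta(K)$; this requires a careful use of the collar/cusp neighborhood estimates near the puncture at $1$, together with Lemma \ref{lem:punctured-disk} which needs $\rho_{\D^*}(\beta,\beta_1)<r(\beta)$ --- so one must check the relevant distances stay in the small-distance regime, which forces $\delta$ to depend on $K$. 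I also need to confirm that all the M\"obius normalizations can be carried out with the three fixed points landing on $\hat\R$, which uses only that $f(\hat\R)=\hat\R$.
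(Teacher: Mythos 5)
Your reduction is sound and in fact mirrors the paper's own first step: normalizing the quadruple so that the statement becomes $|g(c')-1|\le |c'-1|^{1/(K+\epsilon)}$ for a $K$-quasiconformal $g:\hat{\mathbb{C}}\to\hat{\mathbb{C}}$ fixing $0,1,\infty$ and $c'$ near $1$ (the paper normalizes to $(1,c^*,\infty,0)$ and studies $c^*=cr$ as a point of $\C\setminus\{0,1\}$). One correction there: in the intended application $f$ is a quasi-Fuchsian deformation, so $f$ does \emph{not} preserve $\hat{\R}$ and the image cross-ratio is genuinely complex; this does no harm, since the normalizing M\"obius maps can be taken in $PSL_2(\C)$, but your remark that the argument "uses only that $f(\hat\R)=\hat\R$" should be dropped.

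The genuine gap is in your derivation of the H\"older estimate from the hyperbolic geometry of the thrice-punctured sphere. First, the distance to a fixed horocycle in the cusp metric $\frac{|dw|}{|w-1|\log\frac{1}{|w-1|}}$ is asymptotic to $\log\log\frac{1}{|w-1|}$, not to $-\log|w-1|$ as you assert; consequently, even granting the quasi-isometry inequality with multiplicative constant $K$, multiplying a $\log\log$-quantity by $K$ only yields a bound of the shape $|g(c')-1|\le\exp\bigl(-c\,(\log\frac{1}{|c'-1|})^{1/K}\bigr)$, which is far weaker than any H\"older bound $|c'-1|^{1/(K+\epsilon)}$. Second, and more fundamentally, Lemma \ref{lem:punctured-disk} requires as input the hyperbolic distance \emph{between the point and its image}, $\rho_{\mathbb{D}^*}(c'-1,\,g(c')-1)$, and nothing in your argument bounds this displacement; a quasi-isometry inequality controls distances between \emph{pairs} of points and their images, not the distance a single point is moved, and for $K$-quasiconformal self-maps of a hyperbolic surface that displacement is in general unbounded. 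The missing idea — and the heart of the paper's proof — is to embed $f$ in the holomorphic family $f^{t\mu}$, $|t|<1/\|\mu\|_\infty$, observe that $t\mapsto cr(f^{t\mu}(a),f^{t\mu}(b),f^{t\mu}(c),f^{t\mu}(d))$ is a holomorphic map into $\C\setminus\{0,1\}$, and apply the Schwarz--Pick lemma to conclude
$$
\rho_{0,1}\bigl(cr(a,b,c,d),\,cr(f(a),f(b),f(c),f(d))\bigr)\le\rho_{\mathbb{D}_{1/\|\mu\|_\infty}}(0,1)=\log K ;
$$
only then does the comparison with the punctured-disk metric and Lemma \ref{lem:punctured-disk} produce the exponent $e^{-\log K}=1/K$ up to the $\epsilon$-loss. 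Alternatively, your reduced statement is exactly the classical Mori-type H\"older continuity of normalized $K$-quasiconformal maps of $\hat{\mathbb{C}}$ (exponent $1/K$, constant $C(K)$ absorbed into $\epsilon$ by shrinking $\delta$), so citing that theorem would also close the gap — but the hyperbolic-distance mechanism you propose does not prove it.
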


\begin{proof}
Let $\mu$ be the Beltrami coefficient of $f$. 
Let $f^{t}:\hat{\mathbb{C}}\to\hat{\mathbb{C}}$ be a family of quasiconformal maps with the Beltrami coefficients $t\mu$ for $\{t\in\mathbb{C}:|t|<1/|\mu\|_{\infty}\}$ that fix $0$, $1$ and $\infty$. By the definition  $f^1=f$. 

Define $g(t)=cr(f^t(a),f^t(b),f^t(c),f^t(d))$ for $|t|<1/\|\mu\|_{\infty}$. Since the points $f^t(a)$, $f^t(b)$, $f^t(c)$ and $f^t(d)$ are pairwise distinct in $\hat{\mathbb{C}}$ and depend holomorphically on $t$, the function $g$ is holomorphic in $t$ and it  does not take values $0$, $1$ and $\infty$. Therefore we obtained a holomorphic function 
$$
g:\mathbb{D}_{1/\|\mu\|_{\infty}}\to\mathbb{C}\setminus\{ 0,1\},
$$
where $\mathbb{D}_{1/\|\mu\|_{\infty}}=\{ t\in\mathbb{C}:|t|<1/\|\mu\|_{\infty}\}$. 

By the Schwarz lemma \cite[Theorem 10.5]{BeardonMinda}, the function $g$ is decreasing for the hyperbolic metrics $\rho_{\mathbb{D}_{1/\|\mu\|_{\infty}}}$ and $\rho_{0,1}$ on $\mathbb{D}_{1/\|\mu\|_{\infty}}$ and  $\mathbb{C}\setminus \{ 0,1\}$, respectively. 
Therefore we have
\begin{equation}
\label{eq:estimate}
\rho_{0,1}(g(0),g(1))\leq \rho_{\mathbb{D}_{1/\|\mu\|_{\infty}}}(0,1)=\log K
\end{equation}
because $K=\frac{1+\|\mu\|_{\infty}}{1-\|\mu\|_{\infty}}$. Note that $g(0)=cr(a,b,c,d)$ and $g(1)=cr(f(a),f(b),f(c),f(d))$.

Since $z\mapsto 1-z$ is a conformal self-map of $\mathbb{C}\setminus\{ 0,1\}$, it is an isometry for the hyperbolic distance. By (\ref{eq:estimate}), we get that
\begin{equation}
\label{eq:upper-hyp}
\rho_{0,1}(1-g(0),1-g(1))\leq\log K.
\end{equation}

Denote by $\lambda_{0,1}$ infinitesimal form of the hyperbolic metric 
$\rho_{0,1}$ on $\mathbb{C}\setminus\{ 0,1\}$. Then we have (see \cite[Theorem 1-12, pages 17-18]{Ahlfors})
$$
\log \lambda_{0,1}(z)=-\log |z|-\log\log \frac{1}{|z|}+O(1)
$$
with $O(1)\to 0$ as $z\to 0$. 
Equivalently, for each $C_1<1$  there exists a Euclidean disk $\mathbf{D}_\delta (0)$ with center $0$ and radius $0<\delta<1$ such that for $z\in\mathbf{D}_\delta (0)$
\begin{equation}
\label{eq:hyp_metric}
\lambda_{0,1}(z)\geq C_1\frac{1}{|z-1|\log\frac{1}{|z-1|}}.
\end{equation}

Let $\gamma\in PSL_2(\mathbb{R})$ be such that $\gamma (a,b,c,d)=(1,c^*,\infty , 0)$ where $1<c^{*}$. Then $cr(a,b,c,d)=cr(1,c^*,\infty , 0)=c^{*}$. By the assumption we have $1<c^{*}<1+\delta $. Let $\delta_{\gamma}^t\in PSL_2(\mathbb{R})$ be such that $\hat{f}^t:=\delta_{\gamma}^t\circ f^t\circ \gamma^{-1}$ fixes $0$, $1$ and $\infty$. 
The Beltrami coefficients of the family $\hat{f}^t=\delta_{\gamma}^t\circ f^t\circ \gamma^{-1}$ for $|t|\leq 1$ have norm bounded by $\|\mu\|_{\infty}<1$ and therefore the family  is equicontinuous. Thus $|\hat{f}^t(c^{*})-1|<1/2$ when $\delta >0$ is small enough for all $|t|\leq 1$. Note that $g(t)=\hat{f}^t(c^{*})$ and
thus when $|g(0)-1|$ is small enough then $|g(1)-1|<1/2$. 
Therefore $g(0)-1$ and $g(1)-1$ are points in the unit punctured disk $\mathbb{D}^*$. The  inequality (\ref{eq:hyp_metric}) implies that the distance between $g(0)-1$ and $g(1)-1$ in the punctured disk $\mathbb{D}^*$ is bounded above $\frac{1}{C_1}$ times their distance in $\C\setminus\{0,1\}$ which by (\ref{eq:upper-hyp}) is less than $\frac{1}{C_1}\log K$.

By Lemma \ref{lem:punctured-disk} and invariance of $\rho_{\mathbb{D}^*}$ under the Euclidean rotation in the origin, there is $\delta >0$ small enough such that 
$$
|g(1)-1|\leq |g(0)-1|^{e^{-\frac{1}{C_1}\log K}}=|g(0)-1|^{1/K^{1/C_1}}.
$$
By choosing $\delta >0$ small enough such that $C_1$ close enough to $1$  we obtain the lemma.
\end{proof}

We give an infinitesimal version of the above estimate.

\begin{lem}
\label{lem:infinitesimal-liouville}
Let $\mu (t)$ for $t\in\mathbf{D}_{r_0}(0)$ with $r_0>0$ be a holomorphic family of Beltrami coefficients in $\C$. Let $f^t:\hat{\mathbb{C}}\to\hat{\mathbb{C}}$  be the quasiconformal map with the Beltrami coefficient $\mu (t)$ that fixes $0$, $1$ and $\infty$. Given $0<r<r_0$, let $K_r=\sup_{\{ t: |t|\leq r\}}\frac{1+\|\mu (t)\|_{\infty}}{1-\|\mu (t)\|_{\infty}}$ be the maximal quasiconformal constant of $f^t$ for $|t|\leq r$. 

Given $\epsilon >0$ and $0<r<r_0$ there exist $\delta_1 =\delta_1 (K_r) >0$ and $C=C(r)>0$ such that for any distinct $a,b,c,d\in\hat{\C}$   with
$$
|cr(a,b,c,d)-1|<\delta_1
$$
we have, for all $|t|\leq r$,
$$
\Big{|}\frac{d}{dt} cr(f^t(a),f^t(b),f^t(c),f^t(d)){|} \Big{|}\leq C|cr(a,b,c,d)-1|^{1/(K_r+\epsilon)} \log\frac{1}{|cr(a,b,c,d)-1|}.
$$
\end{lem}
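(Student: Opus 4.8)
The plan is to run the argument of Lemma~\ref{lem:liouville-complex} in its infinitesimal (Schwarz--Pick) form and then to insert the polynomial bound on the cross-ratio provided by Lemma~\ref{lem:liouville-complex} itself. Fix $\epsilon>0$ and $0<r<r_0$, and for $t\in\mathbf{D}_{r_0}(0)$ set
$$
g(t)=cr(f^t(a),f^t(b),f^t(c),f^t(d)).
$$
Since $f^t$ is a homeomorphism depending holomorphically on $t$, the four points $f^t(a),\dots,f^t(d)$ are pairwise distinct and depend holomorphically on $t$, so $g$ is a holomorphic map $g:\mathbf{D}_{r_0}(0)\to\mathbb{C}\setminus\{0,1\}$ with $g(0)=cr(a,b,c,d)$, exactly as in the proof of Lemma~\ref{lem:liouville-complex}; note that $cr(a,b,c,d)\neq 1$ because $a,b,c,d$ are distinct. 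The quantity to be estimated is $|g'(t)|$ for $|t|\le r$.

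First I would apply the Schwarz lemma \cite[Theorem 10.5]{BeardonMinda} to $g$ in infinitesimal form: writing $\lambda_{0,1}$ for the density of the hyperbolic metric $\rho_{0,1}$ of curvature $-1$ on $\mathbb{C}\setminus\{0,1\}$ and $\frac{2r_0}{r_0^2-|t|^2}$ for the density of the hyperbolic metric on $\mathbf{D}_{r_0}(0)$, this gives
$$
\lambda_{0,1}(g(t))\,|g'(t)|\le \frac{2r_0}{r_0^2-|t|^2}\le \frac{2r_0}{r_0^2-r^2}=:C_0\qquad (|t|\le r).
$$
Next I would locate $g(t)$ near the puncture $z=1$: for $|t|\le r$ the map $f^t$ is $K_r$-quasiconformal and fixes $0,1,\infty$, so Lemma~\ref{lem:liouville-complex} (the hypothesis there that $f$ conjugates a Fuchsian group is not used in its proof) yields $\delta(K_r)>0$ such that, whenever $|cr(a,b,c,d)-1|<\delta(K_r)$,
$$
|g(t)-1|\le |cr(a,b,c,d)-1|^{1/(K_r+\epsilon)}\qquad (|t|\le r).
$$
Finally, near $z=1$ one has $\log\lambda_{0,1}(z)=-\log|z-1|-\log\log\frac{1}{|z-1|}+O(1)$ with $O(1)\to 0$ as $z\to 1$ (the analogue at the puncture $1$ of (\ref{eq:hyp_metric}); see \cite[Theorem 1-12]{Ahlfors}), so for any fixed $C_1<1$, say $C_1=\frac12$, there is $\delta_2\in(0,1/e)$ with
$$
\lambda_{0,1}(z)\ge C_1\,\frac{1}{|z-1|\log\frac{1}{|z-1|}}\qquad (0<|z-1|<\delta_2).
$$

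Now choose $\delta_1=\delta_1(K_r)>0$ so small that $\delta_1<\delta(K_r)$ and $\delta_1^{1/(K_r+\epsilon)}<\delta_2$. Then for $|cr(a,b,c,d)-1|<\delta_1$ and $|t|\le r$ the last two displays give $0<|g(t)-1|<\delta_2$, hence
$$
|g'(t)|\le \frac{C_0}{C_1}\,|g(t)-1|\log\frac{1}{|g(t)-1|},
$$
and, since $u\mapsto u\log\frac1u$ is increasing on $(0,1/e)$ while $0<|g(t)-1|\le |cr(a,b,c,d)-1|^{1/(K_r+\epsilon)}<1/e$,
$$
|g'(t)|\le \frac{C_0}{C_1(K_r+\epsilon)}\,|cr(a,b,c,d)-1|^{1/(K_r+\epsilon)}\log\frac{1}{|cr(a,b,c,d)-1|},
$$
which is the claimed inequality with $C=\frac{C_0}{C_1(K_r+\epsilon)}$, a constant depending on $r$ (the family $\mu$, $r_0$ and $\epsilon$ being fixed).

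The step I expect to need the most care is the passage from the Schwarz--Pick inequality to the punctured-disk density: one must recognize that the effective target is a punctured neighbourhood of $z=1$, where $\lambda_{0,1}$ is comparable to $\bigl(|z-1|\log\frac1{|z-1|}\bigr)^{-1}$, precisely the density studied in Lemma~\ref{lem:punctured-disk}, and one must use the quantitative bound $|g(t)-1|\le |cr(a,b,c,d)-1|^{1/(K_r+\epsilon)}$ from Lemma~\ref{lem:liouville-complex} rather than mere equicontinuity, since it is this bound, fed through the monotone function $u\log\frac1u$, that simultaneously produces the exponent $1/(K_r+\epsilon)$ and the logarithmic factor on the right-hand side. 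The remaining ingredients --- holomorphy of $g$, the infinitesimal Schwarz--Pick inequality, and the monotonicity of $u\log\frac1u$ on $(0,1/e)$ --- are routine.
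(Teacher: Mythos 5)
Your proposal is correct and follows essentially the same route as the paper: the infinitesimal Schwarz--Pick inequality for the holomorphic map $g(t)=cr(f^t(a),f^t(b),f^t(c),f^t(d))$ into $\C\setminus\{0,1\}$, the lower bound $\lambda_{0,1}(z)\gtrsim \bigl(|z-1|\log\frac{1}{|z-1|}\bigr)^{-1}$ near the puncture $z=1$, and Lemma~\ref{lem:liouville-complex} to bound $|g(t)-1|$ by $|cr(a,b,c,d)-1|^{1/(K_r+\epsilon)}$. Your write-up is in fact slightly more careful than the paper's in first using Lemma~\ref{lem:liouville-complex} to place $g(t)$ in the punctured neighbourhood of $1$ before invoking the local density estimate, and in making explicit the monotonicity of $u\mapsto u\log\frac1u$ on $(0,1/e)$ that the paper leaves implicit in its final step.
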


\begin{proof}
The function $g(t)=cr (f^t(a),f^t(b),f^t(c),f^t(d))$ is a holomorphic map from the disk $\{ |t|<r_0\}$ to $\C\setminus\{ 0,1\}$. Let $\lambda_{\{ |t|<r_0\}}$ and $\lambda_{0,1}$ denote the infinitesimal forms of the corresponding hyperbolic metrics. The Schwarz lemma implies that
$$
\lambda_{0,1}(g(t))|g'(t)|\leq \lambda_{\{ |t|<r_0\}} (t).
$$
Then there exists $C=C(r)>0$ such that, for $|t|\leq r$,
\begin{equation}
\label{eq:schwarz}
|g'(t)|\leq C |g(t)-1|\log\frac{1}{|g(t)-1|}.
\end{equation}
Note that $g'(t)=\frac{d}{dt}cr(f^t(a),f^t(b),f^t(c),f^t(d))$. 
Lemma \ref{lem:liouville-complex} implies that there exists $\delta_1 (K_r) >0$ such that
$$
|cr(f^t(a),f^t(b),f^t(c),f^t(d))-1|\leq |cr(a,b,c,d)-1|^{\frac{1}{K_r+\epsilon}}
$$
for all $a$, $b$, $c$ and $d$ in  $\mathbb{C}$ with $|cr(a,b,c,d)-1|<\delta_1(K)$ 
which together with the inequality (\ref{eq:schwarz}) finishes the proof.
\end{proof}

\section{The Liouville map and uniform H\"older topology}
\label{sec:Holder}

In this this section we define the Liouville map, the space of bounded geodesic currents and the space of bounded H\"older distributions for the Riemann surface $X$.

We identify the conformally hyperbolic Riemann surface $X$ with $\HH /\Gamma$, where $\Gamma$ is a Fuchsian group acting on the upper half-plane $\HH$. The space of oriented geodesics $G(\tilde{X})=(\partial_{\infty}\tilde{X}\times \partial_{\infty}\tilde{X})\setminus\mathrm{diagonal}$ is identified with $(\hat{\R}\times\hat{\R})\setminus\mathrm{diagonal}$. 
Fix a reference point $z_0\in\HH$ and define an {\it angle distance} 
$d(x,y)$ for $x,y\in \hat{\R}=\R\cup\{\infty\}$ with respect to $z_0$ to be the smaller of the two angles between the geodesic rays starting at $z_0$ and ending at $x$ and $y$, respectively. Even though the distance $d$ depends on the choice of the reference point $z_0$, for any two choices the identity map is bi-Lipschitz for the two distances.
The angle distance 
induces the product metric on $G(\tilde{X})$ via the identification  with $(\hat{\R}\times\hat{\R})\setminus\mathrm{diagonal}$ called the {\it angle metric}. Two angle metrics given by two different reference points are bi-Lipschitz.

  A {\it geodesic current} for $X$ is a positive Radon measure on $G(\tilde{X})=(\hat{\R}\times\hat{\R})\setminus\mathrm{diagonal}$ that is invariant under the action of the covering group $\Gamma$ and the change of the orientation of the geodesics (see \cite{Bonahon}). The {\it space of geodesic currents} is denoted by $\mathcal{G}(X)$. The {\it Liouville measure} $L_{\tilde{X}}$ for ${X}$ is the unique (up to scalar multiple) full support Radon measure that is invariant under the isometries of the universal covering $\tilde{X}$ and the change of the orientation of the geodesics. More precisely, the Liouville measure of a Borel set $A\subset (\hat{\R}\times\hat{\R})\setminus\mathrm{diagonal}=G(\tilde{X})$ is given by (see \cite{Bonahon})
 $$
 L_{\tilde{X}}(A)=\iint_{A}\frac{dxdy}{(x-y)^2}.
 $$
 When $A=[a,b]\times [c,d]\subset G(\tilde{X})$ is a {\it box of geodesics} with one endpoint in $[a,b]\subset\hat{\mathbb{R}}$ and another endpoint in $[c,d]\subset\hat{\mathbb{R}}$ then
 $$
 L_{\tilde{X}}([a,b]\times [c,d])=\log cr(a,b,c,d).
 $$ 
 It is evident from the definition that $L_{\tilde{X}}([a,b]\times [c,d])=L_{\tilde{X}}([c,d]\times [a,b])$.
 
 A quasiconformal map $f:X\to X_1$ induces a quasisymmetric map of the ideal boundaries of the universal covers $\tilde{X}$ and $\tilde{X}_1$ of $X$ and $X_1$, respectively. The induced map in turn induces a homeomorphism of the space of geodesics $G(\tilde{X})$ and $G(\tilde{X}_1)$ which is equivariant for the actions of the covering groups. The Teichm\"uller equivalence class  $[f:X\to X_1]$ induces the pull-back of the Liouville measure $L_{X_1}$ to the space of geodesics $G(\tilde{X})$ denoted by $L_{[f]}$ under the induced homeomorphism of $G(\tilde{X})$ and $G(\tilde{X}_1)$ . In general, $L_{[f]}$ is invariant under $\pi_1(X)$ but not invariant under all isometries of $\tilde{X}$ and thus different from $L_X$.
 In fact, $L_{[f]}$ completely recovers the Riemann surface $X_1$ and the Teichm\"uller class of $f:X\to X_1$.

 Bonahon \cite{Bonahon} introduced the {\it Liouville map} 
 $$
 \mathcal{L}:\T (X)\to \mathcal{G}(X)
 $$
 by
 $$
 \mathcal{L}([f])=L_{[f]}.
 $$
Bonahon \cite{Bonahon} used the Liouville map in order to give an alternative description of the Thurston boundary to the Teichm\"uller space of a compact surface. When $X$ is a compact surface the space of geodesic currents $\mathcal{G}(X)$ is equipped with the standard weak* topology for which the Liouville map is an embedding onto its image (see Bonahon \cite{Bonahon}).
 Bonahon and the second author \cite{BonahonSaric} introduced the {\it uniform weak* topology} on the space of geodesic currents in order to introduce a Thurston boundary to Teichm\"uller spaces of arbitrary Riemann surfaces. This is a simplification of the topology that was introduced by the second author \cite{Saric}.
 
Let $\zeta :G(\tilde{X})\to\C$ be a continuous function with a compact support. Define a semi-norm
$$
\|\alpha\|_{\zeta}=\sup_{\gamma\in PSL_2(\R )}\Big{|}\iint_{G(\tilde{X})}\zeta\circ\gamma d\alpha\Big{|}.
$$
 The uniform weak* topology is defined using a family of semi-norms $\{\|\cdot\|_{\zeta}\}$ where $\zeta$ ranges over all continuous functions with compact support (see \cite{BonahonSaric}). 
 
 The space of {\it bounded geodesic currents} $\mathcal{G}_b(X)$ for $X$ consists of all $\alpha\in\mathcal{G}(X)$ such that, for each continuous function $\zeta :G(\tilde{X})\to\mathbb{C}$ with compact support,
 $$
 \|\alpha\|_{\zeta}<\infty .
 $$
 Bonahon and the second author \cite{BonahonSaric} proved that the Liouville map
 $$
 \LLL :\T (X)\to\mathcal{G}_b(X)
 $$
 is an embedding onto its image when $\mathcal{G}_b(X)$ is equipped with the uniform weak* topology and they introduced Thurston boundary to $\T (X)$.
 
 The description of the uniform weak* topology is a significant simplification of the topology introduced by the second author in \cite{Saric}, \cite{Saric1}. The first author \cite{Dong} considered the topology induced by the family of semi-norms $\{\|\cdot\|_{\xi}\}$ where $\xi :G(\tilde{X})\to\C$ is a H\"older continuous function with compact support in order to study the differentiability of the Liouville map. We use this new H\"older topology to give another proof that the Liouville map is real analytic in an appropriate  sense which was proved by Otal \cite{Otal} using a more complicated family of semi-norms from \cite{Saric1} (which involve supremum over all H\"older continuous functions with bounded H\"older norm while we take supremum only over all $\gamma\in PSL_2(\R )$).

Let $H(\tilde{X})$ be the space   of all H\"older continuous functions $\xi: G(\tilde{X})\to\C$ with respect to the product metric on $G(\tilde{X})$ that are of compact support. 
 A linear functional $\W : H(\tilde{X})\to\C$ is said to be {\it bounded} if, for every $\xi\in H(\tilde{X})$,
$$
\| \W \|_{\xi}:=\sup_{\gamma\in PSL_2(\R )} |\W (\xi\circ\gamma )|<\infty .
$$ 
The space of {\it bounded H\"older distributions} $\HHH_{b}(X)$ is the space of all bounded complex linear functionals on the space of H\"older continuous functions $H(\tilde{X})$ with compact support in $G(\tilde{X})$. The topology on $\HHH_{b}(X)$ is induced by the finite intersections of balls for the semi-norms. This makes $\HHH_{b}(X)$ a metrizable topological vector space (see \cite[\S 2.3]{BonahonSaric}). 

Since the space of bounded geodesic currents is a subset of the space of bounded H\"older distributions $\HHH_{b}(X)$, we can consider the Liouville map
$$
\mathcal{L} :\T (X)\to \HHH_{b}(X).
$$
It is not hard to see that the Liouville map is a homeomorphisms onto its image in $\HHH_{b}(X)$ by noting that continuous functions are well approximated by H\"older continuous functions (see \cite[Theorem 4.2.1]{Dong}).

\section{The complexification of the Liouville map}

In this section we define an extension of the Liouville map $\LLL :\T (X)\to\mathcal{H}_b(X)$ to an open neighborhood of $\T (X)$ in the quasi-Fuchsian space $\mathcal{QF}(X)$. The image of the Teichm\"uller space $\T (X)$ under $\mathcal{L}$ consists of geodesic currents, i.e. positive Radon measures on $G(\tilde{X})$. When $[\mu ]\in\mathcal{QF}(X)\setminus\T (X)$ it will turn out that the image $\mathcal{L}([\mu ])$ is not a countably additive measure but rather a linear functional on $H(\tilde{X})$.  Moreover the neighborhood of $\T (X)$ for the extension will depend on a particular function $\xi\in H(\tilde{X})$ for which we define the extension.

Note that any H\"older continuous function with compact support can be written as a finite sum of H\"older continuous functions whose supports are in  boxes of geodesics using  the partition of unity. Therefore we only need to extend $\mathcal{L}([\mu ])(\xi )$ for the case when the support of $\xi$ is in a box of geodesics.

Fix $\lambda$ with $0<\lambda \leq 1$ and $C>0$. Let $\xi :G(\tilde{X})\to\mathbb{C}$ be a $\lambda$-H\"older continuous function. By the definition, the {\it H\"older constant} of $\xi$ is $\sup_{g_1\neq g_2}\frac{|\xi (g_1)-\xi (g_2)|}{d(g_1,g_2)^{\lambda}}$, where the supremum is over all $g_1,g_2\in G(\tilde{X})$ and $d$ is the angle metric. For a box of geodesics $[a,b]\times [c,d]$, denote by 
$H_{\lambda ,C}([a,b]\times [c,d])$ the family of $\lambda$-H\"older continuous functions with the H\"older constant at most $C$ and support in $[a,b]\times [c,d]$. 

Given $n>0$, divide the intervals $[a,b]$ and $[c,d]$ into $2^n$ subintervals that are of equal size with respect to the angle distance. Let $a_i$ and $c_j$ be the division points of $[a,b]$ and $[c,d]$, respectively. In particular $a_0=a$, $a_{2^n}=b$, $c_0=c$ and $c_{2^n}=d$. The box $[a,b]\times [c,d]$ is partitioned into $4^n$ sub-boxes $[a_{i-1},a_i]\times [c_{j-1},c_j]$ for $i,j=1,2,\ldots ,2^n$ whose Liouville measure satisfies
$$
L_{\tilde{X}}([a_{i-1},a_i]\times [c_{j-1},c_j])\leq C_1 4^{-n}
$$
where $C_1>0$ is a constant that depends on $L_{\tilde{X}}([a,b]\times [c,d])$ (by Lemma \ref{lem:boxes} below).

 Note that for $[f^{\mu}]\in \T (X)$, $\xi\in H(\tilde{X})$ and $\gamma\in PSL_2(\mathbb{R})$ we have
$$
\mathcal{L}([f^{\mu}])(\xi\circ\gamma)=\iint_{G(\tilde{X})}\xi dL_{f^{\mu}\circ\gamma^{-1}}
$$
and consequently
\begin{equation}
\label{eq:limit}
\mathcal{L}([f^{\mu}])(\xi\circ\gamma)=\lim_{n\to\infty}\sum_{i,j=1}^{2^n}\xi (a_i,c_j) \log cr(f^{\mu}\circ\gamma^{-1}(a_{i-1}, a_{i},c_{j-1},c_{j})).
\end{equation}
The sum on the right of the above formula is a finite approximation of the integral and note that $\log cr(f^{\mu}\circ\gamma^{-1}(a_{i-1}, a_{i},c_{j-1},c_{j}))>0$.

\vskip .2 cm

In order to get estimates uniform in $\gamma\in PSL_2(\mathbb{R})$, we first need to be able to partition a box of geodesics of a bounded size into smaller boxes of approximately the same size (up to a multiplicative constant that depends only on the size of the initial box). We have

\begin{lem}
\label{lem:boxes}
There exists a continuous function $C=C(x)>0$ for $x>0$ such that
for each box of geodesics $[a,b]\times [c,d]\subset\hat{\mathbb{R}}\times\hat{\mathbb{R}}\setminus\mathrm{diagonal}=G(\tilde{X})$ and for each $n\geq 1$ there exist partitions $[a_{i-1},a_i]$, $i=1,\ldots ,2^n$ and $[c_{j-1},c_j]$, $j=1,\ldots ,2^n$ of intervals $[a,b]$ and $[c,d]$ satisfying
$$
L_{\tilde{X}}([a_{i-1},a_i]\times [c_{j-1},c_j])\leq C 4^{-n}
$$
with $C=C(L_{\tilde{X}}([a,b]\times [c,d]))$.

Moreover, each subsequent partition is obtained by adding one partition point in each interval of the prior partition.
\end{lem}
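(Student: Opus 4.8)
The plan is to construct the partition points for $[a,b]$ and $[c,d]$ separately, by subdividing with respect to the hyperbolic-type measure that makes the Liouville mass of a box equal to $\log cr$ of its four endpoints. First I would normalize: after applying an element of $PSL_2(\R)$ we may assume the box is $[0,s]\times[1,\infty)$ for a suitable $s>0$ (or any other convenient normalization placing the two intervals in standard position), so that $L_{\tilde X}([0,s]\times[1,\infty))=\log cr(0,s,1,\infty)$ depends only on $s$, hence only on $x:=L_{\tilde X}([a,b]\times[c,d])$. The point is that the Liouville measure $\frac{dx\,dy}{(x-y)^2}$ on the box factors, after a Möbius change of coordinates, into a product of a measure on the $[a,b]$-side and one on the $[c,d]$-side; more precisely, once the far interval is sent to $[1,\infty)$ one has, for the first coordinate ranging over $[0,s]$, a one-dimensional positive measure $d\nu$ on $[0,s]$, and $L_{\tilde X}([u,v]\times[1,\infty))=\nu([u,v])$.

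Next I would define the partition of $[a,b]$ to be the one that splits the $\nu$-mass into $2^n$ equal pieces: choose $a_0=a<a_1<\dots<a_{2^n}=b$ with $\nu([a_{i-1},a_i])=2^{-n}\nu([a,b])$ for every $i$, and symmetrically choose $c_0=c<\dots<c_{2^n}=d$ equidistributing the corresponding measure on the $[c,d]$-side. Since the Liouville mass of the sub-box $[a_{i-1},a_i]\times[c_{j-1},c_j]$ is bounded by a product of the two one-dimensional masses times a factor controlled by the geometry of the ambient box — quantitatively, $L_{\tilde X}([a_{i-1},a_i]\times[c_{j-1},c_j])\le C(x)\,\nu([a_{i-1},a_i])\,\nu'([c_{j-1},c_j]) / (\nu([a,b])\,\nu'([c,d]))\cdot L_{\tilde X}([a,b]\times[c,d])$ with $C(x)$ depending only on $x$ — equidistributing each factor into $2^n$ equal pieces gives a bound of the shape $C(x)\,4^{-n}$. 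The dyadic equidistribution is also nested: halving the $\nu$-mass of an interval of the $n$-th partition into two equal halves adds exactly one point and produces the $(n+1)$-st partition, which gives the "moreover" clause.

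To make the dependence on $x$ precise and continuous, I would use that the function $cr(a,b,c,d)$ and the one-dimensional measures $\nu,\nu'$ vary continuously (indeed real-analytically) with the normalized box parameter, and that the constant relating the two-dimensional Liouville mass of a sub-box to the product of its one-dimensional projections is a continuous function of the position of the sub-box inside the normalized box, hence bounded by its maximum over the compact box, a quantity depending continuously on $x$. The main obstacle I anticipate is precisely this last step — controlling the "non-product" distortion of $\frac{dx\,dy}{(x-y)^2}$ uniformly over all sub-boxes of a given box: the factor $1/(x-y)^2$ is not literally a product, so one must check that over the whole box the ratio between $\frac{dx\,dy}{(x-y)^2}$ and $d\nu(x)\,d\nu'(y)$ stays within a bounded range determined only by $x=L_{\tilde X}([a,b]\times[c,d])$. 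This is where the compactness of the normalized box and the fact that the four endpoints stay a definite cross-ratio apart do the work, but it requires a careful, if elementary, estimate. Everything else — the existence of equidistributing partitions, the nestedness, the reduction by $PSL_2(\R)$-invariance of $L_{\tilde X}$ — is routine.
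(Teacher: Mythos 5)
Your proposal is correct, and it reaches the same conclusion by a genuinely different (though closely related) route. The paper also begins with the $PSL_2(\R)$-normalization, sending the box to $[-1,0]\times[c^*,1]$ with $c^*=1/(2e^{x}-1)$ determined by $x=L_{\tilde{X}}([a,b]\times [c,d])$, but then it simply takes the \emph{uniform Euclidean} subdivision of each of the two normalized intervals into $2^n$ equal pieces and estimates each sub-box directly: writing $cr(a_{i-1},a_i,c_{j-1},c_j)=1+\frac{(a_i-a_{i-1})(c_j-c_{j-1})}{(c_j-a_{i-1})(c_{j-1}-a_i)}$, using $\log(1+u)\le u$ and the bounds $c_j-a_{i-1}\ge c^*$, $c_{j-1}-a_i\ge c^*$, $a_i-a_{i-1}=2^{-n}$, $c_j-c_{j-1}=(1-c^*)2^{-n}$, one gets the bound $\frac{1-c^*}{(c^*)^2}4^{-n}$ in two lines, with no need for the marginal measures $\nu,\nu'$ or for the product-comparison step you identify as the main obstacle. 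Your equidistribution scheme does work: in the normalized box $|y-x|$ ranges over $[c^*,2]$, so the density $1/(y-x)^2$ is pinched between $1/4$ and $1/(c^*)^2$ and the one-dimensional marginals have densities bounded above and below in terms of $c^*$ alone; hence the comparison constant is a continuous function of $x$, and dyadic equidistribution of a nonatomic measure is automatically nested, giving the ``moreover'' clause. The trade-off is that the paper's choice of partition makes the estimate an explicit elementary computation, while yours is more conceptual (the partition is adapted to the Liouville marginals) but requires you to actually carry out the density-pinching argument you only sketch; since both partitions are comparable in the normalized coordinates, nothing is lost or gained for the later applications of the lemma, which use only the upper bound $C4^{-n}$ and the nesting.
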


\begin{proof}
The Liouville measure is invariant under the action of the group $PSL_2(\mathbb{R})$. We map $[a,b]\times [c,d]$ by an element of $PSL_2(\mathbb{R})$ onto $[-1,0]\times [c^{*},1 ]$, where $c^{*}=1/(2e^{L_{\tilde{X}}([a,b]\times [c,d])}-1)$. 

Let $a_0=-1$, $a_i=-1+i2^{-n}$ for $i=1,\ldots ,2^n$ and $c_0=c^{*}$, $c_j=c^{*}+(1-c^{*})j2^{-n}$ for $j=1,\ldots ,2^n$.  We estimate
$$
\log cr(a_{i-1},a_{i},c_{j-1},c_{j})=\log \frac{(c_{j-1}-a_{i-1})(c_j-a_i)}{(c_j-a_{i-1})(c_{j-1}-a_i)}.
$$
Using the inequality $\log (1+x) \leq x$ for $x>-1$ and the fact that
\begin{align*}
	cr(a_{i-1},a_{i},c_{j-1},c_{j})
	&=\frac{(c_{j-1}-a_{i-1})(c_j-a_i)}{(c_j-a_{i-1})(c_{j-1}-a_i)}\\
	&=1+\frac{(a_i-a_{i-1})(c_j-c_{j-1})}{(c_j-a_{i-1})(c_{j-1}-a_i)},
\end{align*} it is enough to estimate 
$$
\frac{(a_i-a_{i-1})(c_j-c_{j-1})}{(c_j-a_{i-1})(c_{j-1}-a_i)}.
$$
Since $c_{j}-a_{i-1} \geq c^{*}$, $c_{j-1}-a_{i} \geq c^{*}$, $a_i-a_{i-1}= 2^{-n}$ and $c_j-c_{j-1}=(1-c^{*})2^{-n}$, the desired estimate follows with $C=\frac{1-c^*}{(c^*)^2}$. 
\end{proof}

Our goal is to show that the  limit in (\ref{eq:limit}) exists when we replace $[f^{\mu}]\in\T (X)$ with $[f^{\nu}]\in\mathcal{QF}(X)$ that are close to $\T (X)$. 
Denote by $\log z$ the branch of the logarithm defined in $\C\setminus\{ \mathrm{Re}(z)\leq 0\}$ whose imaginary part is in the interval $(-\pi ,\pi )$. 
Given $\epsilon >0$, denote by $\mathcal{V}_{\epsilon}\subset \mathcal{QF}(X)$ the set of all $[f^{\nu}]$ with $\|\nu\|_{\infty}<\epsilon$.

\begin{lem}
\label{lem:conv-dist}
For $0<\lambda \leq 1$ and $C>0$ fixed, there exists $\epsilon >0$ such that the limit
$$
\lim_{n\to\infty}\sum_{i,j=1}^{2^n}\xi (a_i,c_j) \log cr(f^{\nu}\circ\gamma^{-1}(a_{i-1}, a_{i},c_{j-1},c_{j}))
$$
exists for all $[f^\nu ]\in\mathcal{V}_{\epsilon}$, $\xi\in H_{\lambda ,C}([a,b]\times [c,d])$ and $\gamma\in PSL_2(\mathbb{R})$. The convergence is uniform and the limit is bounded in $\mathcal{V}_{\epsilon}\times PSL_2(\mathbb{R})$.
\end{lem}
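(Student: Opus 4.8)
The plan is to prove that the partial sums
$$
S_n=S_n(\gamma ,\nu ,\xi )=\sum_{i,j=1}^{2^n}\xi (a_i,c_j)\,\log cr\big{(}f^{\nu}\circ\gamma^{-1}(a_{i-1},a_i,c_{j-1},c_j)\big{)}
$$
form a Cauchy sequence with a bound on $\sum_{n}|S_{n+1}-S_n|$ that is uniform over $[f^{\nu}]\in\mathcal{V}_{\epsilon}$, $\gamma\in PSL_2(\R )$ and $\xi\in H_{\lambda ,C}([a,b]\times [c,d])$, and then to estimate the limit. First I would fix the parameters. Put $K_{\epsilon}=\frac{1+\epsilon}{1-\epsilon}$, so that every $f^{\nu}$ with $\|\nu\|_{\infty}<\epsilon$ is $K_{\epsilon}$-quasiconformal. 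Since $\frac{2}{2-\lambda}>1$ and $K_{\epsilon}\to 1$ as $\epsilon\to 0$, I would pick $\epsilon >0$ together with an auxiliary exponent $\epsilon_0>0$ so small that $K_{\epsilon}+\epsilon_0<\frac{2}{2-\lambda}$; this is the only condition imposed on $\epsilon$, and it depends only on $\lambda$. Let $\delta =\delta (K_{\epsilon})>0$ be the constant produced by Lemma \ref{lem:liouville-complex} for the exponent $\epsilon_0$, and let $C_1=C(L_{\tilde X}([a,b]\times [c,d]))$ be the constant of Lemma \ref{lem:boxes}, so that $L_{\tilde X}(B^{(n)}_{ij})\leq C_1 4^{-n}$ for every level-$n$ sub-box $B^{(n)}_{ij}=[a_{i-1},a_i]\times [c_{j-1},c_j]$. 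Finally I would take $n_0$ large enough that $e^{C_1 4^{-n_0}}<1+\delta$ and $\big{(}2C_1 4^{-n_0}\big{)}^{1/(K_{\epsilon}+\epsilon_0)}<\frac{1}{2}$. The constants $\delta$, $C_1$, $n_0$ depend on $\lambda$, $C$ and the fixed box, but not on $\gamma$ or on the particular $\nu$.

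The two ingredients I would extract at level $n\geq n_0$ are as follows. The cross-ratio of a box is the product of the cross-ratios of its four sub-boxes — a polynomial identity in the six endpoints, hence preserved after applying $f^{\nu}\circ\gamma^{-1}$ to them. By M\"obius invariance, the unperturbed cross-ratio $cr(\gamma^{-1}(a_{i-1}),\gamma^{-1}(a_i),\gamma^{-1}(c_{j-1}),\gamma^{-1}(c_j))$ equals $cr(a_{i-1},a_i,c_{j-1},c_j)=e^{L_{\tilde X}(B^{(n)}_{ij})}$, which lies in $(1,1+\delta )$; so Lemma \ref{lem:liouville-complex}, applied to the $K_{\epsilon}$-quasiconformal map $f^{\nu}$ and the four points $\gamma^{-1}(a_{i-1}),\ldots ,\gamma^{-1}(c_j)$ with exponent $\epsilon_0$, gives
$$
\big{|}cr\big{(}f^{\nu}\circ\gamma^{-1}(a_{i-1},a_i,c_{j-1},c_j)\big{)}-1\big{|}\leq \big{(}2C_1\big{)}^{1/(K_{\epsilon}+\epsilon_0)}\,4^{-n/(K_{\epsilon}+\epsilon_0)},
$$
uniformly in $\gamma$, $\nu$, $i$ and $j$. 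By the choice of $n_0$ these complex cross-ratios stay within $\frac{1}{2}$ of $1$, so the chosen branch of $\log$ is additive along the above factorization: setting $\ell^{\nu}(B)=\log cr(f^{\nu}\circ\gamma^{-1}(\text{corners of }B))$, one has $\ell^{\nu}(B^{(n)}_{ij})=\sum_{k=1}^4\ell^{\nu}(B^{(n+1)}_{ij,k})$ for the four level-$(n+1)$ sub-boxes $B^{(n+1)}_{ij,k}$ of $B^{(n)}_{ij}$, and also $|\ell^{\nu}(B)|\leq 2\,|cr(f^{\nu}\circ\gamma^{-1}(\text{corners of }B))-1|$.

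Using the additivity to rewrite $S_n$ over the level-$(n+1)$ partition, and noting that $(a_i,c_j)$ is itself one of the corners of the refined partition, one obtains
$$
S_{n+1}-S_n=\sum_{i,j=1}^{2^n}\sum_{k=1}^4\big{(}\xi (p_{ij,k})-\xi (a_i,c_j)\big{)}\,\ell^{\nu}(B^{(n+1)}_{ij,k}),
$$
where $p_{ij,k}$ is the corner of $B^{(n+1)}_{ij,k}$ at which $\xi$ is evaluated in $S_{n+1}$ and lies in $B^{(n)}_{ij}$. Because $[a,b]$ and $[c,d]$ are cut into $2^n$ arcs of equal angle length, $B^{(n)}_{ij}$ has angle-diameter at most $c_0 2^{-n}$ with $c_0$ depending only on the box, so the $\lambda$-H\"older bound gives $|\xi (p_{ij,k})-\xi (a_i,c_j)|\leq C c_0^{\lambda}\,2^{-n\lambda}$. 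Feeding in the cross-ratio estimate at level $n+1$ and counting the $4^{n+1}$ terms,
$$
|S_{n+1}-S_n|\leq C_2\,4^{\,n+1}\,2^{-n\lambda}\,4^{-(n+1)/(K_{\epsilon}+\epsilon_0)}=C_3\,2^{\,n\left(2-\lambda-\frac{2}{K_{\epsilon}+\epsilon_0}\right)}
$$
for $n\geq n_0$, with $C_2$, $C_3$ depending only on $\lambda$, $C$, $\epsilon$ and the box. By the choice of $\epsilon$ and $\epsilon_0$ the exponent $2-\lambda-\frac{2}{K_{\epsilon}+\epsilon_0}$ is strictly negative, so $\sum_{n\geq n_0}|S_{n+1}-S_n|$ is finite and bounded independently of $\gamma$, $\nu$ and $\xi\in H_{\lambda ,C}([a,b]\times [c,d])$. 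Hence $(S_n)$ converges uniformly on $\mathcal{V}_{\epsilon}\times PSL_2(\R )$ (uniformly in such $\xi$ as well). Finally, $|S_{n_0}|$ is bounded uniformly since it has $4^{n_0}$ terms each controlled by the level-$n_0$ cross-ratio estimate together with $\|\xi\|_{\infty}\leq C c_0^{\lambda}$ (which holds because $\xi$ vanishes on the boundary of its supporting box), so the limit is bounded on $\mathcal{V}_{\epsilon}\times PSL_2(\R )$ too.

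I expect the crux of the argument to be the exponent inequality $K_{\epsilon}+\epsilon_0<\frac{2}{2-\lambda}$ behind the last display: the refinement creates $4^n$ sub-boxes, so the uniform decay $4^{-n/(K_{\epsilon}+\epsilon_0)}$ of the complex cross-ratios coming from Lemma \ref{lem:liouville-complex} must, together with the H\"older gain $2^{-n\lambda}$ of $\xi$, overcome this combinatorial growth; that is exactly what forces $\mathcal{V}_{\epsilon}$ to be a sufficiently small neighborhood, with the required smallness dictated by the H\"older exponent $\lambda$. A secondary technical point is the additivity of the chosen branch of $\log$ along the factorization of the cross-ratio, which is why the estimate is carried out only for $n\geq n_0$, where every complex cross-ratio in sight is close to $1$.
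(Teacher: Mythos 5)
Your proposal is correct and follows essentially the same route as the paper: telescope the partial sums over successive dyadic refinements of the box, use the multiplicativity of the cross-ratio (hence additivity of the chosen branch of $\log$ once all factors are near $1$) to pair each level-$n$ term with its four children, gain $2^{-\lambda n}$ from the H\"older continuity of $\xi$ and $4^{-n/(K_{\epsilon}+\epsilon_0)}$ from Lemma \ref{lem:liouville-complex} applied to the M\"obius-invariant unperturbed cross-ratios, and close with the exponent condition $2-\lambda-\frac{2}{K_{\epsilon}+\epsilon_0}<0$, which is exactly the paper's condition $1-\frac{\lambda}{2}-\omega<0$. You are in fact somewhat more explicit than the paper about the threshold $n_0$ needed for the branch of $\log$ to be additive and about bounding the initial term $|S_{n_0}|$.
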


\begin{proof} Let $I_n:=\sum_{i,j=1}^{2^n}\xi (a_i,c_j) \log cr(f^{\nu}\circ\gamma^{-1}(a_{i-1}, a_{i},c_{j-1},c_{j}))$. The partition for $I_{n+1}$ is obtained by dividing each box $[a_{i-1},a_i]\times [c_{j-1},c_j]$ into four sub-boxes $[a_{(i-1)k}, a_{ik}]\times [c_{(j-1)k},c_{jk}]$ for $k=1,\ldots ,4$ of Liouville measures at most $C_14^{-n-1}$ for some universal constant $C_1$ by Lemma \ref{lem:boxes}. 

Lemma \ref{lem:liouville-complex} implies that there is $\epsilon >0$ such that the imaginary part of the cross-ratio $cr(f^{\nu}\circ\gamma^{-1}(a_{i(k-1)}, a_{ik},c_{j(k-1)},c_{jk}))$ is small when $n$ is large and $[f^{\nu}]\in\mathcal{V}_{\epsilon}$. Thus the logarithm of the cross-ratio of $f^{\nu}\circ\gamma^{-1}(a_{i-1}, a_{i},c_{j-1},c_{j})$ is a finitely additive complex measure on these boxes. More precisely, we have
$$
\log cr(f^{\nu}\circ\gamma^{-1}(a_{i-1}, a_{i},c_{j-1},c_{j}))=\sum_{k=1}^4\log cr(f^{\nu}\circ\gamma^{-1}(a_{(i-1)k}, a_{ik},c_{(j-1)k},c_{jk})).
$$
and
$$
|I_{n+1}-I_n|\leq\sum_{i,j=1}^{2^n}\sum_{k=1}^4|\xi (a_{ik},c_{jk})-\xi (a_i,c_j)|\cdot |\log cr(f^{\nu}\circ\gamma^{-1}(a_{(i-1)k}, a_{ik},c_{(j-1)k},c_{jk}))|.
$$

By Lemma \ref{lem:liouville-complex}, we have
$$|\log cr(f^{\nu}\circ\gamma^{-1}(a_{(i-1)k}, a_{ik},c_{(j-1)k},c_{jk}))|\leq C_24^{-\omega (n+1)}$$ 
where $\omega \to 1$ as $\epsilon \to 0$ and $C_2$ is some constant which depends on $\epsilon$ and $C_1$.
Since $\xi\in H_{\lambda ,C}([a,b]\times [c,d])$ we have
$$
|\xi (a_{ik},c_{jk})-\xi (a_i,c_j)|\leq C2^{-\lambda n}.
$$
This implies $|I_{n+1}-I_n|\leq C_3 4^{(1-\frac{\lambda}{2}-\omega )n}$ for some constant $C_3$ and
$$
I_1+\sum_{n=1}^{\infty} (I_{n+1}-I_n)
$$
converges uniformly for small $\epsilon >0$ since $1-\frac{\lambda}{2}-\omega <0$. The partial sum of the first $n$ terms of the series is $I_{n+1}$ and the lemma is proved.
\end{proof}

Let $[f^{\mu}]\in \T (X)$ and $\xi\in H(\tilde{X})$. It would be ideal to find a neighborhood $\mathcal{V}$ in $\mathcal{QF}(X)$ of the point $[f^{\mu}]$ such that the Liouville map $\LLL$ extends to $\mathcal{V}$. Otal \cite{Otal} noted that this seems impossible and the proof of Lemma \ref{lem:conv-dist} uses the fact that $1-\omega-\frac{\lambda}{2}<0$ for $\omega$ close to $1$ which is dependent on the size of $\lambda >0$ and in turn on the size of $\epsilon$. For our purposes,
it is enough to find a neighborhood $\mathcal{V}_{\epsilon (\lambda )}([f^\mu])$ of $[f^{\mu}]$ that depends on $\lambda$ and define the extension on $\lambda$-H\"older continuous functions $\xi$ with compact support in $ G(\tilde{X})=G(\HH )$.

Define $\mathcal{V}_{\epsilon (\lambda)}([\mu ])$ to consist of all $[f^{\mu +\nu}]\in\mathcal{QF}(X)$ with $\|\nu\|_{\infty}<\epsilon$, where $\epsilon =\epsilon (\lambda )>0$ is to be determined later. Define $f^{\eta}=f^{\mu +\nu}\circ (f^{\mu})^{-1}$ and note that $\|\eta\|_{\infty}\leq \frac{\|\nu\|_{\infty}}{1-|\mu \|_{\infty}}$. In particular, $\|\nu\|_{\infty}\to 0$ if and only if $\|\eta\|_{\infty}\to 0$. 

Let $\HHH^\lambda_b (X)$ be the space of complex linear functionals $\mathbf{W}$ on the space $H_{\lambda}(\tilde{X})$ of $\lambda$-H\"older continuous function with  compact support in $G(\tilde{X})$ that satisfy
$$
\|\mathbf{W}\|_{\xi}=\sup_{\gamma\in PSL_2(\R )}|\mathbf{W}(\xi\circ\gamma )|<\infty
$$
for all $\xi\in H_{\lambda}(\tilde{X})$. 

\begin{thm}
\label{thm:extension}
Given $[f^{\mu}]\in\mathcal{T}(X)$ and $0<\lambda\leq 1$, there exists $\epsilon =\epsilon (\lambda ) >0$ such that the Liouville map 
$\mathcal{L}: \T (X)\to\mathcal{H}_b(X)$ extends to a continuous map 
$$
\hat{\mathcal{L}}:\mathcal{V}_\epsilon ([\mu])\to\mathcal{H}^\lambda_b (X).
$$
\end{thm}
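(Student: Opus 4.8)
The plan is first to reduce, by a partition of unity, to the case where $\xi$ is supported in a single box of geodesics $B_0=[a_0,b_0]\times[c_0,d_0]$, and then to move the base point $[f^\mu]$ to the base point $[0]$ of the image surface $X^{[\mu]}$ via the translation map $T_{[\mu]}$. Write $f^\eta=f^{\mu+\nu}\circ(f^\mu)^{-1}$ as in the discussion preceding the theorem, so that the maximal dilatation $K_\eta:=\tfrac{1+\|\eta\|_\infty}{1-\|\eta\|_\infty}$ tends to $1$ as $\|\nu\|_\infty\to 0$, and recall that the Liouville map intertwines $T_{[\mu]}$ with pull-back by the boundary map $\partial f^\mu$, i.e. $\mathcal L_X([f])(\psi)=\mathcal L_{X^{[\mu]}}(T_{[\mu]}[f])(\psi\circ(\partial f^\mu)^{-1})$ for $[f]\in\T(X)$. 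For $\gamma\in PSL_2(\mathbb R)$ I would then define
\[
\hat{\mathcal L}([f^{\mu+\nu}])(\xi\circ\gamma):=\lim_{n\to\infty}\sum_{i,j=1}^{2^n}\xi(\hat a_i,\hat c_j)\,\log cr\bigl(f^{\mu+\nu}\circ\gamma^{-1}(\hat a_{i-1},\hat a_i,\hat c_{j-1},\hat c_j)\bigr),
\]
where the partition points $\hat a_i,\hat c_j$ of $B_0$ are chosen so that their images under $\partial f^\mu\circ\gamma^{-1}$ are the partition points of the box $B_\gamma:=\partial f^\mu(\gamma^{-1}(B_0))$ produced by Lemma \ref{lem:boxes}. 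Since $f^{\mu+\nu}$ restricted to $\hat{\mathbb R}$ equals $(\partial f^\eta)\circ(\partial f^\mu)$ and $\gamma^{-1}$ is M\"obius, the cross-ratios above equal those of $f^\eta$ applied to the Lemma \ref{lem:boxes} sub-boxes of $B_\gamma$, and $\xi(\hat a_i,\hat c_j)=\tilde\xi_\gamma(\tilde a_i,\tilde c_j)$ with $\tilde\xi_\gamma:=\xi\circ\gamma\circ(\partial f^\mu)^{-1}$ and $\tilde a_i=\partial f^\mu(\gamma^{-1}(\hat a_i))$; thus the sum $I_n$ above is exactly the approximating sum for $\mathcal L_{X^{[\mu]}}([f^\eta])(\tilde\xi_\gamma)$ relative to that partition of $B_\gamma$.

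Convergence of $(I_n)$ would rest on three facts uniform in $\gamma$. (i) $L_{\tilde X}(B_\gamma)\le M$ for a constant $M=M(\mu,B_0)$: the four ideal vertices of $\gamma^{-1}(B_0)$ have the fixed cross-ratio $e^{L_{\tilde X}(B_0)}$ (by $PSL_2(\mathbb R)$-invariance of $L_{\tilde X}$), and a $K_0$-quasiconformal map with $K_0=K_0(\mu)$ the dilatation of $f^\mu$ distorts a cross-ratio bounded away from $0,1,\infty$ by an amount depending only on $K_0$. (ii) $\tilde\xi_\gamma$ is $\lambda'$-H\"older with H\"older constant bounded uniformly in $\gamma$, for some $\lambda'=\lambda'(\lambda,K_0)>0$: the maps $\gamma\circ(\partial f^\mu)^{-1}$ are uniformly quasisymmetric self-maps of the circle $\hat{\mathbb R}$, hence uniformly H\"older. (iii) By Lemma \ref{lem:boxes}, the level-$n$ sub-boxes $Q$ of $B_\gamma$ satisfy $L_{\tilde X}(Q)\le C(M)4^{-n}$, while the angle-lengths of the $2^n$ sub-intervals into which either side of $B_\gamma$ is split sum to the angle-length of that side, hence to at most $\pi$. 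Granting (i)--(iii), the telescoping argument of Lemma \ref{lem:conv-dist}, using the finite additivity of $\log cr(f^\eta(\cdot))$ on small boxes and Lemma \ref{lem:liouville-complex} applied to $f^\eta$ (which gives $|\log cr(f^\eta(Q))|\le C(C(M)4^{-n})^{\omega}$ with $\omega=1/(K_\eta+\epsilon_0)$), yields $|I_{n+1}-I_n|\le C\,4^{-n\omega}\sum_{P\ \mathrm{level}\ n}(\mathrm{diam}\,P)^{\lambda'}$, the diameter being in the angle metric. By concavity of $x\mapsto x^{\lambda'}$ together with (iii) one has $\sum_{P\ \mathrm{level}\ n}(\mathrm{diam}\,P)^{\lambda'}\le C\,2^{n(2-\lambda')}$, hence $|I_{n+1}-I_n|\le C\,2^{n(2-\lambda'-2\omega)}$. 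Choosing $\epsilon=\epsilon(\lambda)>0$ (allowed to depend on $[f^\mu]$ through $K_0$) small enough that $\omega>1-\lambda'/2$, the series $I_1+\sum(I_{n+1}-I_n)$ converges geometrically, uniformly over $\gamma\in PSL_2(\mathbb R)$ and $[f^{\mu+\nu}]\in\mathcal V_\epsilon([\mu])$, and the limit is likewise uniformly bounded.

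To finish I would check: independence of the limit of the enclosing box $B_0$ and of the partition, from the finite additivity above and a refinement argument; linearity in $\xi$, which is immediate; agreement with $\mathcal L$ on $\T(X)\cap\mathcal V_\epsilon([\mu])$, which is (\ref{eq:limit}); and $\pi_1(X)$-invariance, since for $A\in\Gamma\subset PSL_2(\mathbb R)$ one has $f^{\mu+\nu}\circ A=(f^{\mu+\nu}A(f^{\mu+\nu})^{-1})\circ f^{\mu+\nu}$ with $f^{\mu+\nu}A(f^{\mu+\nu})^{-1}\in PSL_2(\mathbb C)$, so the cross-ratios for $\xi\circ A$ agree with those for $\xi$. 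The uniform bound of the previous paragraph gives $\|\hat{\mathcal L}([f^{\mu+\nu}])\|_\xi<\infty$ for every $\xi$, so $\hat{\mathcal L}([f^{\mu+\nu}])\in\mathcal H^\lambda_b(X)$. Continuity of $\hat{\mathcal L}$ into $\mathcal H^\lambda_b(X)$ reduces to continuity of $\nu\mapsto\hat{\mathcal L}([f^{\mu+\nu}])(\xi\circ\gamma)$ uniformly in $\gamma$: the data $B_\gamma$, $\tilde\xi_\gamma$ and the partition are independent of $\nu$, each $I_n$ depends holomorphically on $\nu$ with $\nu$-derivatives of its terms bounded uniformly in $\gamma$ on a ball (by Cauchy estimates applied to $cr(f^{\mu+\nu}(\cdot))-1$, or by Lemma \ref{lem:infinitesimal-liouville}), and $I_n\to\hat{\mathcal L}([f^{\mu+\nu}])(\xi\circ\gamma)$ uniformly in $(\nu,\gamma)$, so the limit inherits the continuity.

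The main obstacle is the uniformity over $\gamma\in PSL_2(\mathbb R)$ when $[f^\mu]$ is far from the origin. Treating $f^{\mu+\nu}\circ\gamma^{-1}$ directly keeps the exponent $\lambda$ but the decay exponent $\omega$ produced by Lemma \ref{lem:liouville-complex} only approaches $1/K_0$, which is too weak once $K_0\ge 2/(2-\lambda)$; passing to $X^{[\mu]}$ trades $K_0$ for $K_\eta\to 1$ at the price of replacing $\lambda$ by the smaller exponent $\lambda'(\lambda,K_0)$ of $\tilde\xi_\gamma$, which is harmless since we may shrink $\epsilon$ and the target only requires a functional on $\lambda$-H\"older functions. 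The second delicate point is that the boxes $B_\gamma$ vary with $\gamma$ and the Lemma \ref{lem:boxes} partitions of boxes of bounded Liouville measure need not have uniformly small angle-diameters; this is handled by the observation that only $\sum_P(\mathrm{diam}\,P)^{\lambda'}$ enters the estimate, and that this sum is controlled by concavity because the relevant angle-lengths add up to at most $\pi$.
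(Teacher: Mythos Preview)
Your overall strategy is the same as the paper's: reduce to a box, factor $f^{\mu+\nu}=f^\eta\circ f^\mu$ so that the dilatation of $f^\eta$ is small, push the test function forward to the image surface, and run the telescoping estimate of Lemma~\ref{lem:conv-dist} with Lemma~\ref{lem:liouville-complex} applied to $f^\eta$. The continuity argument and the checks (linearity, $\pi_1$-invariance, agreement with $\mathcal L$) are fine.

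There is, however, a genuine gap in step~(ii). The implication ``uniformly quasisymmetric self-maps of $\hat{\mathbb R}$, hence uniformly H\"older'' is false without normalization: the M\"obius maps $x\mapsto nx$ are all $1$-quasisymmetric, yet for any exponent $\alpha>0$ their $\alpha$-H\"older constants in the angle metric blow up as $n\to\infty$ (they send an arc of angle $\asymp 1/n$ onto an arc of fixed angle). Thus the H\"older constant of $\tilde\xi_\gamma$ is not uniform in $\gamma$, and the constant $C$ in your bound $|I_{n+1}-I_n|\le C\,4^{-n\omega}\sum_P(\mathrm{diam}\,P)^{\lambda'}$ depends on $\gamma$, spoiling uniform convergence. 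The paper avoids this by inserting a M\"obius normalization $\delta_\gamma\in PSL_2(\mathbb R)$ so that $f^{\mu_1}:=\delta_\gamma\circ f^\mu\circ\gamma^{-1}$ fixes $0,1,\infty$; a family of $K_0$-quasiconformal maps fixing three points \emph{is} uniformly bi-H\"older in the spherical metric, and then $\xi\circ(f^{\mu_1})^{-1}$ has a uniform H\"older exponent and constant, which is what drives Lemma~\ref{lem:conv-dist}.

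Your concavity idea is sound but, as you have set it up, it does not bypass (ii): the H\"older constant of $\tilde\xi_\gamma$ still sits in front of $\sum_P(\mathrm{diam}\,P)^{\lambda'}$. It \emph{would} work if you moved the estimate back to $B_0$: write the increments as $|\xi(\hat a_{ik},\hat c_{jk})-\xi(\hat a_i,\hat c_j)|\le C_\xi\big(d(\hat a_{i-1},\hat a_i)+d(\hat c_{j-1},\hat c_j)\big)^{\lambda}$, using only the fixed H\"older data of $\xi$, and note that the induced partition points $\{\hat a_i\}$ lie in the fixed interval $[a_0,b_0]$, whose total angle-length is a constant $\ell_0$. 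Then by concavity $\sum_{i=1}^{2^n} d(\hat a_{i-1},\hat a_i)^{\lambda}\le 2^{n(1-\lambda)}\ell_0^{\lambda}$ independently of $\gamma$, and you get $|I_{n+1}-I_n|\le C\,2^{n(2-\lambda-2\omega)}$ with a constant genuinely independent of $\gamma$. This is in fact a slight sharpening of the paper's route (you keep the original exponent $\lambda$ rather than a degraded $\lambda'$), but it requires this switch of viewpoint; as written, step~(ii) fails.
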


\begin{proof}
We fix isometric identifications of $\tilde{X}$ and  $\tilde{X}_1:=f^{\mu}(\tilde{X})$ with the upper half-plane $\HH$. Fix $[f^{\mu +\nu}]\in\mathcal{V}_{\epsilon}([\mu ])$.
Let $\xi\in H_{\lambda}(\tilde{X})$ and without loss of generality  we can assume that the support of $\xi$ is in a box of geodesics $[a,b]\times [c,d]$. Let $\gamma\in PSL_2(\mathbb{R})$ be arbitrary and $\alpha_{\gamma}\in PSL_2(\mathbb{R})$ such that $\alpha_{\gamma}\circ f^{\mu +\nu}\circ\gamma^{-1}$ fixes $0$, $1$ and $\infty$. 
Then
$\alpha_{\gamma}\circ f^{\mu +\nu}\circ\gamma^{-1}=f^{\eta}\circ f^{\mu_1}$ with $f^{\mu_1}=\delta_{\gamma}\circ f^{\mu}\circ\gamma^{-1}:\mathbb{H}\to\mathbb{H}$ where $\delta_{\gamma}\in PSL_2(\R )$ is chosen to normalize $f^{\mu_1}$ to fix $0$, $1$ and $\infty$. Necessarily $f^{\eta}$ fixes $0$, $1$ and $\infty$. The corresponding Beltrami coefficients satisfy 
$\|\mu_1\|_{\infty}=\|\mu\|_{\infty}$ and $\|\eta\|_{\infty} \leq \frac{\|\nu\|_{\infty}}{1-\|\mu\|_{\infty}}$. 

By varying $\gamma\in PSL_2(\mathbb{R})$, we obtain a family of maps $\{ f^{\mu_1}\}_{\gamma}$.
Since each $f^{\mu_1}$ fixes $0$, $1$ and $\infty$, and the Beltrami coefficients have constant norms, the whole family and its inverses are H\"older homeomorphisms with  H\"older exponent $\lambda_1>0$ and H\"older constant $C_1<\infty$ in the spherical metric, where a single $\lambda_1>0$ and $C_1<\infty$ work for the whole family. 
It follows that $\xi\circ (f^{\mu_1})^{-1}\in H_{\lambda_2}(\tilde{X}_1)$ where $\lambda_2>0$ holds for the whole family and the support of each $\xi\circ (f^{\mu_1})^{-1}$ is in the box $[a_*,b_*]\times [c_*,d_*]:=f^{\mu_1}([a,b]\times [c,d])$ which depends on $\gamma\in PSL_2(\R )$.

The construction starts with an approximation of $\xi\circ (f^{\mu_1})^{-1}$ by step functions $\xi_n$ (see \cite{Saric1}, \cite{Dong}). Partition the box $[a_*,b_*]\times [c_*,d_*]$ into $4^n$ sub-boxes using Lemma \ref{lem:boxes} as in the proof of Lemma \ref{lem:conv-dist}. The sub-boxes $[a_{i-1},a_i]\times [c_{j-1},c_j]$ of $[a_*,b_*]\times [c_*,d_*]$ intersect in at most their boundaries which have Liouville measure $0$. The step function $\xi_n$ is defined to have a constant value $\xi\circ (f^{\mu_1})^{-1} (a_i,c_j)$ on the box   $[a_{i-1},a_i]\times [c_{j-1},c_j]$ for $i,j=1,2,\ldots ,2^n$ and zero elsewhere. The ambiguity for the definition of $\xi_n$ on the intersections of  the boxes is not important because the Liouville map is defined using the integration. 

For $\xi\in H_{\lambda}([a,b]\times [c,d])$ we have
\begin{equation}
\iint_{G(\tilde{X})} \xi \circ \gamma dL_{f^{\mu }}=\iint_{G(\tilde{X})} \xi\circ (f^{\mu_1})^{-1} dL_{\tilde{X}_1}=\lim_{n\to\infty} \iint_{G(\tilde{X})} \xi_n  dL_{\tilde{X}_1}
\end{equation}
and
\begin{equation}
\label{eq:step-int}
\iint_{G(\tilde{X})} \xi_n  dL_{\tilde{X}_1}=\sum_{i,j=1}^{2^n}\xi\circ (f^{\mu_1})^{-1}  (a_i,c_j) \log cr(a_{i-1}, a_{i},c_{j-1},c_{j}).
\end{equation}

The expression in (\ref{eq:step-int}) and $f^{\mu +\nu}\circ\gamma=f^{\eta}\circ f^{\mu_1}$ motivates the definition
\begin{equation}
\label{eq:step_complex}
\hat{\LLL}[f^{\mu +\nu}](\xi_n\circ\gamma ) =\sum_{i,j=1}^{2^n}\xi_n (a_i,c_j) \log cr f^{\eta}(a_{i-1}, a_{i},c_{j-1},c_{j}).
\end{equation}

By Lemma \ref{lem:conv-dist}, there is $\epsilon >0$ such that $\hat{\LLL}[f^{\mu +\nu}](\xi_n\circ\gamma )$ converges uniformly on $\mathcal{V}_{\epsilon}([\mu ])\times PSL_2(\mathbb{R})$ to a bounded function which is the desired extension. In Theorem \ref{thm:main1} in the next section we prove that $\hat{\LLL}$ is holomorphic on each affine disk in $\mathcal{V}_{\epsilon}([\mu ])$. By \cite[Theorem 14.9, page 198]{Chae}, it follows that $\hat{\LLL}$ is continuous.
\end{proof}

\section{The Liouville embedding is real analytic}

In this section we prove that the Liouville embedding is real analytic. To do so, we show that the complexification $\hat{\LLL}:\mathcal{V}_{\epsilon}([\mu ])\to \HHH^{\lambda}_b(X)$ is holomorphic.

\begin{thm}
\label{thm:main1}
Let $0<\lambda \leq 1$ and $[\mu ]\in\T (X)$.
For $\epsilon =\epsilon (\lambda )>0$, the complexified Liouville embedding 
$$
\hat{\LLL} :\mathcal{V}_{\epsilon}([\mu ])\to \HHH_b^{\lambda}(X)
$$
is holomorphic.
\end{thm}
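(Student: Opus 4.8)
The plan is to verify holomorphicity of $\hat{\LLL}$ by checking it on affine disks in $\mathcal{V}_{\epsilon}([\mu ])$, which suffices for a map into a topological vector space with the stated metrizable structure (one tests weak holomorphicity against the semi-norms $\|\cdot\|_{\xi}$ and uses that a locally bounded, separately holomorphic map is holomorphic). Concretely, fix $[\mu]\in\T(X)$, a $\lambda$-H\"older test function $\xi$ supported in a box of geodesics $[a,b]\times[c,d]$, a complex harmonic direction $\phi=(\phi_1,\phi_2)$, and consider the affine disk $t\mapsto [f^{\mu+\nu(t)}]$ where $\nu(t)$ comes from the Ahlfors--Weill section applied to the chart around $[\mu]$ (so $\nu(t)$ depends holomorphically on $t$ and $\|\nu(t)\|_\infty$ is uniformly small on $|t|\le r$ after shrinking $r$). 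Writing $f^{\eta(t)}=f^{\mu+\nu(t)}\circ(f^{\mu})^{-1}$ as in Theorem~\ref{thm:extension}, one has, for every $\gamma\in PSL_2(\R)$ and the associated normalized maps, a holomorphic-in-$t$ family $f^{\eta(t)}$ fixing $0,1,\infty$, and the $n$-th approximant
$$
\hat{\LLL}[f^{\mu+\nu(t)}](\xi_n\circ\gamma)=\sum_{i,j=1}^{2^n}\xi_n(a_i,c_j)\,\log cr\, f^{\eta(t)}(a_{i-1},a_i,c_{j-1},c_j)
$$
is a finite sum of holomorphic functions of $t$ (each cross-ratio is holomorphic and stays near $1$, so the principal branch of $\log$ is holomorphic there), hence holomorphic in $t$.

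The substantive point is to upgrade this to holomorphicity of the limit $W(t):=\hat{\LLL}[f^{\mu+\nu(t)}](\xi\circ\gamma)=\lim_n \hat{\LLL}[f^{\mu+\nu(t)}](\xi_n\circ\gamma)$ and, more importantly, of the $\HHH_b^\lambda(X)$-valued map $t\mapsto\hat{\LLL}[f^{\mu+\nu(t)}]$ in the topology induced by the semi-norms. Here I would invoke Lemma~\ref{lem:conv-dist}: after choosing $\epsilon=\epsilon(\lambda)$ small, the telescoping estimate $|I_{n+1}-I_n|\le C_3\,4^{(1-\frac{\lambda}{2}-\omega)n}$ holds \emph{uniformly} over $\gamma\in PSL_2(\R)$ \emph{and} over $t$ with $|t|\le r$, because the bounds there depend only on $K_r$ and on $\lambda$, $C$, $C_1$. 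Thus $\hat{\LLL}[f^{\mu+\nu(t)}](\xi_n\circ\gamma)$ converges to $\hat{\LLL}[f^{\mu+\nu(t)}](\xi\circ\gamma)$ uniformly in $(t,\gamma)$. A uniform limit of holomorphic functions of $t$ is holomorphic, so each $t\mapsto\hat{\LLL}[f^{\mu+\nu(t)}](\xi\circ\gamma)$ is holomorphic; taking the supremum over $\gamma$ shows $t\mapsto\|\hat{\LLL}[f^{\mu+\nu(t)}]\|_\xi$ is locally bounded. By \cite[Theorem 14.9]{Chae} (vector-valued holomorphy: separate/weak holomorphy plus local boundedness implies holomorphy), the map $t\mapsto\hat{\LLL}[f^{\mu+\nu(t)}]\in\HHH_b^\lambda(X)$ is holomorphic on each affine disk.

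To finish, I would assemble the affine-disk statements into holomorphicity of $\hat{\LLL}$ on the open set $\mathcal{V}_\epsilon([\mu])$: the charts on $\mathcal{QF}(X)$ are product charts given by the Ahlfors--Weill sections on $\T(X)\times\T(\bar X)$, and the two coordinate Beltrami coefficients enter the formula for $f^{\eta(t_1,t_2)}$ holomorphically (independently), so by the previous paragraph $\hat{\LLL}$ is separately holomorphic in each coordinate and locally bounded (again by the uniform estimate of Lemma~\ref{lem:conv-dist}), hence holomorphic by Hartogs' theorem for Banach-valued maps together with \cite[Theorem 14.9]{Chae}. The main obstacle I anticipate is not any single estimate but bookkeeping: one must make sure the constant $\epsilon(\lambda)$ and the radius $r$ can be chosen so that $\|\mu+\nu(t)\|_\infty$, equivalently $\|\eta(t)\|_\infty$, stays below the threshold forcing $\omega>1-\frac{\lambda}{2}$ for \emph{all} the normalized families $\{f^{\eta(t)}\}$ arising as $\gamma$ ranges over $PSL_2(\R)$ simultaneously — this is exactly what Lemma~\ref{lem:liouville-complex} with its $\gamma$-independent $\delta(K)$, $C(K)$ is built to provide, but one should state explicitly that the quasiconformal constant $K_r$ governing $\omega$ is uniform in $\gamma$ because conjugation by elements of $PSL_2(\R)$ does not change Beltrami norms.
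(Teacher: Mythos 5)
Your proposal is essentially correct, but it takes a genuinely different route through the core analytic step. The paper constructs the derivative explicitly: it differentiates the approximating sums term by term, obtaining $\dot{I}_1+\sum_{n}(\dot{I}_{n+1}-\dot{I}_n)$, proves convergence of that series using the \emph{infinitesimal} cross-ratio estimate (Lemma \ref{lem:infinitesimal-liouville}, which yields the bound $\sum_n n\,4^{n(\omega+\frac{\lambda_1}{2}-1)}<\infty$), and then verifies the uniform-in-$\gamma$ difference-quotient limit (\ref{eq:derivative-uniform}) by hand, splitting into a finite part and a tail controlled by the Mean Value Theorem. You instead avoid Lemma \ref{lem:infinitesimal-liouville} entirely: you use only Lemma \ref{lem:conv-dist} to get uniform convergence of the holomorphic approximants $I_n(t)$ in $(t,\gamma)$, deduce scalar holomorphy of each $t\mapsto\hat{\LLL}[f^{\mu+\nu(t)}](\xi\circ\gamma)$, and then appeal to a soft weak-to-strong principle (local boundedness of $\|\cdot\|_\xi$ plus holomorphy against the norming family of evaluation functionals). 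What your route buys is economy --- the whole of Lemma \ref{lem:infinitesimal-liouville} and the four displayed estimates (\ref{eq:uniform-n})--(\ref{eq:part2}) become unnecessary; what the paper's route buys is an explicit series formula for the derivative $D_{[\nu_t]}\hat{\LLL}(\eta)$, which connects to the earlier differentiability results.

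One point in your argument needs to be made explicit rather than delegated to the citation. The semi-norm is $\|W\|_\xi=\sup_\gamma|W(\xi\circ\gamma)|$, a supremum over the non-compact family $PSL_2(\R)$, so holomorphy of each scalar function $t\mapsto W(t)(\xi\circ\gamma)$ does not by itself give convergence of the difference quotients in the semi-norm: you must run the Cauchy integral formula on a slightly smaller disk, bound
$$
\Big|\tfrac{1}{h}\big(W(t+h)-W(t)\big)(\xi\circ\gamma)-G(t)(\xi\circ\gamma)\Big|\leq C\,|h|\sup_{|s|=r''}\sup_{\gamma}|W(s)(\xi\circ\gamma)|,
$$
and observe that the right-hand side is finite precisely because of the uniform bound from Lemma \ref{lem:conv-dist}; you must also check that the candidate derivative $G(t)$, defined pointwise by $G(t)(\xi\circ\gamma)=\frac{d}{dt}W(t)(\xi\circ\gamma)$, is itself an element of $\HHH_b^\lambda(X)$ (linearity and $\pi_1$-invariance are immediate, and $\|G(t)\|_\xi<\infty$ again follows from the Cauchy estimate), since the target is only a locally convex metrizable space and one cannot invoke Banach-space completeness to produce the derivative as a vector-valued contour integral. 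This is exactly the content the paper supplies by hand; once you include it, your proof closes. Your final appeal to Hartogs is unnecessary: since the affine disks range over all complex directions in the product chart, Gateaux holomorphy plus local boundedness already gives holomorphy on the open set.
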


\begin{proof}
It is enough to prove that $\hat{\LLL}$ is holomorphic on each affine disk $D=\{ \phi^0+t\phi:|t|<r \}$ in a chart of $\mathcal{QF}(X)$ corresponding to $\mathcal{V}_{\epsilon}([\mu ])$ and that $\hat{\LLL}$ is bounded (see \cite[Theorem 14.9, page 198]{Chae}). By the proof of Theorem \ref{thm:extension}, $\hat{\LLL}(\mathcal{V}_{\epsilon}([\mu ]))$ is bounded and it remains to prove that $\hat{\LLL}$ is holomorphic on each affine disk.

If $\eta^0$ and $\eta$ are the complex harmonic Beltrami coefficients corresponding to $\phi^0$ and $\phi$ for the Riemann surface $f^\mu(X)$, then the complex harmonic Beltrami coefficient $\eta_t$ corresponding to $\phi^0+t\phi$ satisfies
$$
\eta_t=\eta^0+t\eta .
$$

The inverse of the affine disk $D$ in $\mathcal{QF}(X)$ is represented by the quasiconformal maps
$
f^{\eta_t}\circ f^\mu.$ Let $\nu_t$ be the Beltrami coefficient of $
f^{\eta_t}\circ f^\mu\circ\gamma^{-1}.$

There exists $\delta_{\gamma}\in PSL_2(\R )$ such that 
$$
f^{\eta_t}\circ f^{\mu}\circ\gamma^{-1}=f^{\hat{\eta}_t}\circ f^{\mu_1}
$$
where $f^{\hat{\eta}_t}$ and $f^{\mu_1}=\delta_{\gamma}\circ f^{\mu}\circ\gamma^{-1}$ fix $0$, $1$ and $\infty$.

We need to prove that there exists a complex linear map $D_{[\nu_t ]}\hat{\LLL}:T_{[\nu_t ]}\mathcal{QF} (X)\to\HHH^{\alpha}_b(X)$ such that
\begin{equation}
\label{eq:derivative-uniform}
\lim_{h\to 0}\frac{\|\hat{\LLL}([\nu_{t+h}])-\hat{\LLL}([\nu_t ])-hD_{[\nu_t ]}\hat{\LLL}(\eta )\|_{\xi}}{|h|}=0
\end{equation}
for all $\xi\in H_{\alpha}(\tilde{X})$ with the limit being uniform in $\gamma\in PSL_2(\mathbb{R})$. 

By the use of the partition of unity, we can assume that the support of $\xi$ is in a box of geodesics $[a,b]\times [c,d]$. As before, the support of the family $\xi\circ (f^{\mu_1})^{-1}$ is in a box of geodesics $[a_*,b_*]\times [c_*,d_*]$. We partition $[a_*,b_*]\times [c_*,d_*]$ into $4^n$ sub-boxes and define a step function $\xi_n$ to have value $\xi\circ (f^{\mu_1})^{-1}(a_i,c_j)$ on $[a_{i-1},a_i]\times [c_{j-1},c_j]$ and zero elsewhere. Let $I_n (t)=\iint_{G(\tilde{X})}\xi_n dL_{f^{\hat{\eta}_t}}=\sum_{i,j=1}^{2^n}\xi\circ (f^{\mu_1})^{-1} (a_i,b_j) \log cr(f^{\hat{\eta}_t}(a_{i-1}, a_{i},c_{j-1},c_{j}))$. We proved in Theorem \ref{thm:extension} that $I_1+\sum_{n=1}^{\infty}(I_{n+1}-I_n)=\hat{\LLL}([\nu_t ])(\xi\circ \gamma )$. 

Denote by $\dot{I}_n:=\frac{d}{dt}I_n(t)$ and define
$$
D_{[\nu_t ]}\hat{\LLL}(\eta )(\xi\circ\gamma)=\dot{I}_1+\sum_{n=1}^{\infty}(\dot{I}_{n+1}-\dot{I}_n).
$$
The argument in Lemma \ref{lem:conv-dist} estimates the series $\sum_{n=1}^{\infty}(I_{n+1}-I_n)$ by $\sum_{n=1}^{\infty}4^{n(\omega +\frac{\lambda_1}{2}-1)}<\infty$. An analogous approach using the estimate from Lemma \ref{lem:infinitesimal-liouville} gives an upper bound $\sum_{n=1}^{\infty}n4^{n(\omega +\frac{\lambda_1}{2}-1)}<\infty$ to $\sum_{n=1}^{\infty}|\dot{I}_{n+1}-\dot{I}_n|$. This shows that $D_{[\nu_t ]}\hat{\LLL}(\eta )(\xi\circ\gamma)$ is well-defined with an upper bound independent of $\gamma$. Thus $D_{[\nu_t]}\hat{\LLL}(\eta )=\frac{d}{dt}\hat{\LLL}([\nu_t ])\in\HHH_b^{\lambda}(X)$ and it is complex linear as a limit of complex linear $\dot{I}_n$. 

It remains to prove (\ref{eq:derivative-uniform}). To simplify the notation we write $I(t):=\hat{\LLL}([\nu_t ])(\xi\circ\gamma )$ and $\dot{I}(t)=\frac{d}{dt}I(t)=D_{[\nu_t ]}\hat{\LLL}(\eta )(\xi\circ\gamma )$. Then  (\ref{eq:derivative-uniform}) becomes
$$
\lim_{h\to 0}\sup_{\gamma\in PSL_2(\R )} \frac{|I(t+h)-I(t)-h\dot{I}(t)|}{|h|}=0.
$$
It is enough to prove that for any $\epsilon >0$ there is $n>0$ such that
\begin{equation}
\label{eq:uniform-n}
\lim_{h\to 0}\sup_{\gamma\in PSL_2(\R )} \frac{|I_n(t+h)-I_n(t)-h\dot{I}_n(t)|}{|h|}<\frac{\epsilon}{2}
\end{equation} 
and
\begin{equation}
\label{eq:uniform-limit-n}
\lim_{h\to 0}\sup_{\gamma\in PSL_2(\R )} \frac{|(I_n-I)(t+h)-(I_n-I)(t)-h(\dot{I}_n-\dot{I})(t)|}{|h|}<\frac{\epsilon}{2}
\end{equation}

Inequality (\ref{eq:uniform-n}) holds for any $n$ because the limit is zero by the uniform (in $\gamma\in PSL_2(\R )$) differentiability of $cr (f^{[\nu_t ]}\circ\gamma )(a_{i-1},a_i,c_{j-1},c_j)$ in the variable $t$. To estimate the limit in (\ref{eq:uniform-limit-n}), we divide it into two parts and prove that
\begin{equation}
\label{eq:part1}
\lim_{h\to 0}\sup_{\gamma\in PSL_2(\R )} \frac{|(I_n-I)(t+h)-(I_n-I)(t)|}{|h|}<\frac{\epsilon}{4}
\end{equation}
and
\begin{equation}
\label{eq:part2}
\lim_{h\to 0}\sup_{\gamma\in PSL_2(\R )} \frac{|h(\dot{I}_n-\dot{I})(t)|}{|h|}<\frac{\epsilon}{4}.
\end{equation}

The inequality (\ref{eq:part2}) holds for $n$ large enough because $(\dot{I}-\dot{I}_n)(t)=\sum_{k=n}^\infty (\dot{I}_{k+1}-\dot{I}_k)(t)$ is a tail of a uniformly convergent series. To prove (\ref{eq:part1}) note that by  Mean Value Theorem the limit is bounded above by $\max_s\sum_{k=n}^\infty |(\dot{I}_{k+1}-\dot{I}_k)(s)|$ for $s$  between $t$ and $t+h$. The series is arbitrary small uniformly in $\gamma\in PSL_2(\R )$ for $n$ large because it is the tail of a uniformly convergent series which was established in the first part of the proof  and the theorem is established.
\end{proof}

Since holomorphic maps are equal to the sums of their Taylor series, a direct consequence of the above theorem is

\begin{thm}
\label{thm:main2}
The Liouville embedding 
$$
\LLL :\T (X)\to \HHH_b(X)
$$
is real analytic.
\end{thm}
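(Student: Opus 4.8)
The plan is to deduce Theorem~\ref{thm:main2} directly from Theorem~\ref{thm:main1} together with the realization of $\T(X)$ as a totally real submanifold of $\mathcal{QF}(X)$ described in Section~1. First I would recall that real analyticity is a local property, so it suffices to verify it in a neighborhood of an arbitrary point $[\mu]\in\T(X)$. Fix such a point and pass to a chart for $\mathcal{QF}(X)$ around the corresponding point in $\mathcal{QF}(X)$; under the Bers embedding this chart is identified with an open subset of the complex Banach space $\mathcal{Q}_b(f^\mu(X))\times\mathcal{Q}_b(\overline{f^\mu(X)})$, and $\T(X)$ sits inside it as the totally real, real-analytic submanifold cut out by the antiholomorphic involution $(\phi_1,\phi_2)\mapsto(\overline{\phi_2(\bar z)},\overline{\phi_1(\bar z)})$ (equivalently, the locus where the two Schwarzian coordinates are complex conjugates of one another via the reflection in $\hat\R$). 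Equivalently, one may use the Ahlfors--Weill description: the points of $\T(X)$ near $[\mu]$ are those whose harmonic Beltrami coefficient $\eta$ satisfies $\eta(z)=\overline{\eta(\bar z)}$.

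Next I would invoke Theorem~\ref{thm:main1}: for the given $\lambda$ there is $\epsilon=\epsilon(\lambda)>0$ such that the extended map $\hat{\LLL}\colon\mathcal{V}_\epsilon([\mu])\to\HHH_b^\lambda(X)$ is holomorphic, and on $\T(X)\cap\mathcal{V}_\epsilon([\mu])$ it agrees with the original Liouville map $\LLL$ (this is exactly how $\hat\LLL$ was constructed in Theorem~\ref{thm:extension}, via the limit~(\ref{eq:limit})). A holomorphic map between complex Banach spaces is, in particular, real analytic as a map of the underlying real Banach spaces — it is locally the sum of a convergent power series, and restricting a convergent power series to a real-analytic (here totally real) submanifold produces a real-analytic map into $\HHH_b^\lambda(X)$ viewed as a real topological vector space. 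Since this holds for every $\lambda\in(0,1]$ and the topology on $\HHH_b(X)$ is the one induced by all the semi-norms $\|\cdot\|_\xi$ with $\xi\in H(\tilde X)$ (and every such $\xi$ is $\lambda$-H\"older for some $\lambda$), real analyticity into each $\HHH_b^\lambda(X)$ assembles to real analyticity into $\HHH_b(X)$: each semi-norm $\|\cdot\|_\xi$ of $\LLL$ is a real-analytic scalar-valued function of the chart coordinates.

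The point requiring the most care is the passage from \emph{holomorphic} on the complex neighborhood $\mathcal{V}_\epsilon([\mu])$ to \emph{real analytic} on $\T(X)$, because the Bers chart around $[\mu]\in\T(X)$ is \emph{not} simply the restriction of the $\mathcal{QF}(X)$ chart; rather it is the restriction to the totally real slice, whose defining equations involve complex conjugation. One must check that the inclusion $\T(X)\hookrightarrow\mathcal{QF}(X)$, expressed in the respective charts, is real analytic — this is the content of the sentence in Section~1 that $\T(X)$ embeds into $\mathcal{QF}(X)$ "as a totally real-analytic submanifold", which follows from the explicit Ahlfors--Weill formula $\Phi^{-1}(\phi)=[-2y^2\phi(\bar z)]$ being real analytic in the real and imaginary parts of $\phi$. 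Composing a holomorphic map with this real-analytic embedding yields a real-analytic map, and since $\hat\LLL$ restricted to this slice coincides with $\LLL$, the theorem follows. I would also note, as the excerpt's final remark anticipates, that one can phrase the conclusion more directly: holomorphic functions equal the sums of their Taylor series, and the Taylor expansion of $\hat\LLL$ at $[\mu]$, restricted to the real directions tangent to $\T(X)$, exhibits $\LLL$ explicitly as a convergent power series near $[\mu]$, which is the definition of real analyticity for the topological vector space structure on $\HHH_b(X)$.
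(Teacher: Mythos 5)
Your proposal is correct and follows the same route as the paper, which deduces Theorem~\ref{thm:main2} as an immediate corollary of Theorem~\ref{thm:main1}: the holomorphic extension $\hat{\LLL}$ on $\mathcal{V}_\epsilon([\mu])$ restricts to the totally real submanifold $\T(X)$ as a real-analytic map. Your write-up merely makes explicit the points the paper leaves implicit (locality, the totally real slice, and the assembly over all H\"older exponents $\lambda$), and these elaborations are accurate.
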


\end{document}